%
\documentclass[12pt]{article}
\usepackage{e-jc}


\usepackage{amsmath,graphicx}
\usepackage{amssymb}
\usepackage{verbatim}
\usepackage{array}
\usepackage{latexsym}
\usepackage{enumerate}
\usepackage{amsfonts}
\usepackage{amsthm}
\usepackage{graphicx,stackengine,scalerel}
\usepackage[numbers,sort&compress]{natbib}
\usepackage[english]{babel}



{\theoremstyle{definition}
	\newtheorem*{definition*}{Definition}
	
	\newtheorem*{proposition*}{Proposition}
	\newtheorem*{corollary*}{Corollary}
	\newtheorem*{lemma*}{Lemma}

	\def\F{\mathbb F}

	\def\cX{\mathcal X}

	\def\PG{{\rm{PG}}}

	\def\AG{{\rm{AG}}}
	\def\GF{{\rm{GF}}}

	\def\dim{\mbox{\rm dim}}
	
	\def\fq{{\mathbb F}_q}
	\def\Fq{{\mathbb F}_q}
	
	\def\fqn{{\mathbb F}_{q^n}}




	




	\def\tang{\ThisStyle{\abovebaseline[0pt]{\scalebox{-1}{$\SavedStyle\perp$}}}}
	

\dateline{Aug 1, 2019}{May 8, 2020}{TBD}

\MSC{51E21,05B25}

%
%
\Copyright{The authors. Released under the CC BY-ND license (International 4.0).}

\title{On absolute points of correlations in   $\PG(2,q^n)$}


\author{Jozefien D'haeseleer \thanks{Supported by FWO grant.}\\
\small Department of Mathematics: Analysis, Logic and Discrete Mathematics \\[-0.8ex]
\small Ghent University\\[-0.8ex] 
\small Gent, Flanders, Belgium\\
\small\tt jozefien.dhaeseleer@ugent.be\\
\and
Nicola Durante \\
\small Dipartimento di Matematica e Applicazioni "Caccioppoli"\\[-0.8ex]
\small Universit\`a di Napoli
"Federico II", Complesso di Monte S. Angelo\\[-0.8ex]
\small Napoli, Italy\\
\small\tt ndurante@unina.it }

\begin{document}

\maketitle


\begin{abstract}
\noindent Let $V$ be a  $(d+1)$-dimensional vector space over a field $\F$.
Sesquilinear forms over $V$ have been largely studied when they are reflexive and hence give rise to a (possibly degenerate) polarity of  the $d$-dimensional projective space $\PG(V)$.  Everything is known in this case for both degenerate and non-degenerate reflexive forms if  $\F$  is either  ${\mathbb R}$, 
${\mathbb C}$ or a finite field  ${\mathbb F}_q$.  
In this paper we consider  degenerate, non-reflexive sesquilinear forms of $V=\fqn^3$.
We will see that these forms give rise to degenerate correlations of $\PG(2,q^n)$ whose
set of absolute points are, besides cones,  the (possibly degenerate)
$C_F^m$-sets studied in \cite{DonDur14}.
In the final section we collect some  results from the huge work of B.C. Kestenband  regarding what is known for the set of 
the absolute  points  of correlations in $\PG(2,q^n)$ induced  by a  non-degenerate, non-reflexive sesquilinear form of $V=\fqn^3$.
\end{abstract}
\section{Introduction and definitions}
\indent
Let $V$ and $W$ be two vector spaces over the same field ${\mathbb F}$.
A map $f:V\longrightarrow W$ is called $\sigma$-{\em semilinear}  if there exists
an automorphism $\sigma$ of ${\mathbb F}$
such that 
\[f(v+v') = f(v)+f(v') \, \, \, \, \mbox{ and } \, \, \, \, f(a v) = a^\sigma f(v)\]
for all vectors $v\in V$ and all scalars $a\in {\mathbb F}$.
\\ If $\sigma $ is the identity map, then $f$ is a {\em linear map}. 
\\
Let $V$ be an ${\mathbb F}$-vector space with  dimension $d+1$ and let $\sigma$ be an automorphism of   ${\mathbb F}$.
A map
\[ \langle \mbox{ } ,\mbox{ }   \rangle : (v,v') \in V\times V \longrightarrow \langle v , v' \rangle \in {\mathbb F}
\] is a $\sigma$-{\em sesquilinear form} or a $\sigma$-{\em semibilinear form} on $V$ if it is a linear map on the first argument and it is a $\sigma$-semilinear map on the second argument, that is:
\[\langle v+v', v'' \rangle = \langle v ,  v'' \rangle + \langle   v', v''\rangle\]
\[\langle v, v'+v'' \rangle = \langle v ,  v' \rangle + \langle   v, v''\rangle\]
\[\langle a v, v' \rangle= a \langle v, v' \rangle,  \,  \,  \,  \,  \langle  v, a v' \rangle= a^\sigma  \langle v, v' \rangle \]
for all $v,v',v''\in V$, $a\in {\mathbb F}$.
If $\sigma$ is the identity map, then $\langle \mbox{ },\mbox{ } \rangle$ is a {\em bilinear form}.
If ${\cal B} = (e_1,e_2,\ldots, e_{d+1})$ is an ordered basis of $V$, then for $x,y \in V$ we have
$\langle x,y \rangle = X_t A Y^\sigma$. Here $A=(a_{ij})=(\langle e_i, e_j \rangle)$ is
the {\em associated matrix}  to the $\sigma$-sesquilinear form in the ordered basis ${\cal B}$, and $X$, $Y$ are the columns of coordinates of $x,y$ w.r.t. ${\cal B}$.
In what follows  we will denote by $A_t$ the transpose
of a matrix $A$.
The term {\em sesqui} comes from the Latin and it means one and a half.
For every subset $S$ of $V$ put
\[ S^\perp := \{x \in V: \langle x, y \rangle=0 \,\,\, \forall y \in S \}\]
\[ S^{\tang} := \{y \in V: \langle x, y \rangle=0 \,\,\, \forall x \in S \}\]
Both $S^\perp$ and $S^{\tang}$ are subspaces of $V$. The subspaces $V^\perp$ and $V^{\tang}$ are called the {\em left} and the {\em right  radical} of $\langle , \rangle$, respectively.

\begin{proposition}
	The right and left radical of a sesquilinear form of a vector space $V$ have the same dimension.
\end{proposition}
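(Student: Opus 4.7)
The plan is to fix an ordered basis $\mathcal{B}=(e_1,\dots,e_{d+1})$ of $V$, write the form as $\langle x,y\rangle = X_t A Y^\sigma$, and express both radicals as the solution sets of matrix equations involving $A$.

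First I would handle the left radical. A vector $x$ with coordinate column $X$ lies in $V^\perp$ iff $X_t A Y^\sigma = 0$ for every $Y\in\F^{d+1}$. Since $Y\mapsto Y^\sigma$ is a bijection of $\F^{d+1}$, this is equivalent to $X_t A = 0$, i.e.\ $A_t X = 0$. Hence $V^\perp$ is the kernel of the linear map on $\F^{d+1}$ given by $A_t$, so
\[\dim V^\perp = (d+1)-\mathrm{rank}(A_t)= (d+1)-\mathrm{rank}(A).\]

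Next I would treat the right radical. A vector $y$ with column $Y$ lies in $V^{\tang}$ iff $X_t A Y^\sigma = 0$ for every $X$, which amounts to $AY^\sigma=0$, i.e.\ $Y^\sigma\in \ker A$. The map $\Phi:\F^{d+1}\to\F^{d+1}$, $Y\mapsto Y^\sigma$ is a $\sigma$-semilinear bijection; its inverse is $\sigma^{-1}$-semilinear. The key observation, which is the only nontrivial step, is that such a semilinear bijection sends any $\F$-subspace $W\subseteq\F^{d+1}$ to an $\F$-subspace of the same $\F$-dimension: if $w_1,\dots,w_k$ is a basis of $W$, then $\Phi^{-1}(w_1),\dots,\Phi^{-1}(w_k)$ spans $\Phi^{-1}(W)$ because $Y^\sigma=\sum c_iw_i$ implies $Y=\sum c_i^{\sigma^{-1}}\Phi^{-1}(w_i)$, and linear independence transfers in the same way. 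Applying this to $W=\ker A$ gives
\[\dim V^{\tang}=\dim \Phi^{-1}(\ker A)=\dim\ker A = (d+1)-\mathrm{rank}(A).\]

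Comparing the two expressions yields $\dim V^\perp = \dim V^{\tang}$, as required. The only subtlety is the semilinear bijection step above, since $V^{\tang}$ is cut out by a condition that is not $\F$-linear in $Y$; everything else reduces to standard rank-nullity for the matrix $A$.
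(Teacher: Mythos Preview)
Your proof is correct and follows essentially the same line as the paper: both compute $\dim V^\perp = (d+1)-\mathrm{rk}(A)$ and then handle the right radical by undoing the automorphism $\sigma$. The paper phrases the latter step by introducing the matrix $B=(a_{ij}^{\sigma^{-1}})$ and invoking $\mathrm{rk}(B)=\mathrm{rk}(A)$, which is precisely your semilinear-bijection argument written in coordinates.
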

\noindent Let $d+1$ be the dimension of $V$. Then  $\dim V^\perp = d+1 - \mbox{rk}(A)$ and  $\dim V^{\tang} = d+1 -\mbox {rk}(B)$
where $B=(a_{ij}^{\sigma^{-1}})$. The assertion follows from rk$(B)=\mbox{rk}(A)$.
\\

\noindent A sesquilinear form $\langle , \rangle$ is called  {\em non-degenerate} if  ${V}^\perp={V}^{\tang}=\{\underline{0}\}.$   It is called {\em reflexive} if $\langle x, y \rangle=0$ implies $\langle y, x \rangle =0$ for all $x,y \in V$.
\\
A  {\em duality} or  {\em correlation} of $\PG(d,\F)$ is a bijective map preserving incidence between points and hyperplanes of 
$\PG(d,\F)$.  It can be seen as a collineation of $\PG(d,\F)$ into its dual space $\PG(d,\F)^*$.
We will call {\em degenerate duality} or {\em degenerate correlation} of $\PG(d,\F)$ a non-bijective map 
preserving incidence between points and hyperplanes of 
$\PG(d,\F)$.
If  $V$ is equipped with a sesquilinear form $\langle \mbox{ },\mbox{ } \rangle$, then
there are two {\em induced}  (possibly degenerate) dualities  of $\PG(d,\F)$:
\[Y\mapsto X_t A Y^\sigma=0 \hspace{2cm} \mbox{ and } \hspace{2cm} Y\mapsto Y_t A X^\sigma=0.\] 
\\
The following holds.
\begin{theorem}
	Any duality of $\PG(d,\F), d\ge 2,$ is induced by a non-degenerate sesquilinear form on $V=\F^{d+1}$.
\end{theorem}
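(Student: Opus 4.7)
The plan is to reduce the statement to the Fundamental Theorem of Projective Geometry. A duality $\delta$ of $\PG(d,\F)$ is an incidence-preserving bijection between the points of $\PG(V)$ and its hyperplanes; since hyperplanes of $\PG(V)$ correspond bijectively to the points of the dual projective space $\PG(V^*)$, the map $\delta$ is equivalently a collineation $\PG(V)\to \PG(V^*)$ between two projective spaces of the same dimension $d\ge 2$.

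Invoking the Fundamental Theorem of Projective Geometry, I would obtain an automorphism $\sigma$ of $\F$ and a $\sigma$-semilinear bijection $\phi\colon V\to V^*$ whose projectivization equals $\delta$. Using $\phi$, define
\[
\langle x,y\rangle := \phi(y)(x), \qquad x,y\in V.
\]
Linearity in the first argument is automatic, because each $\phi(y)$ is itself a linear functional on $V$. Additivity in the second argument follows from the additivity of $\phi$, while the identity $\phi(ay)=a^\sigma\phi(y)$ translates directly into $\langle x,ay\rangle=a^\sigma\langle x,y\rangle$. Thus $\langle\,,\,\rangle$ is a $\sigma$-sesquilinear form in the sense of the paper.

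Non-degeneracy is then a consequence of the bijectivity of $\phi$. If $x\in V^\perp$, then $\phi(y)(x)=0$ for every $y\in V$; surjectivity of $\phi$ implies that every linear functional on $V$ vanishes at $x$, so $x=0$. Symmetrically, if $y\in V^{\tang}$ then $\phi(y)$ annihilates all of $V$, so $\phi(y)=0$ and injectivity gives $y=0$. Finally, for every nonzero $x\in V$ the hyperplane $\delta([x])$ is by construction the projectivization of $\ker\phi(x)$, which equals $\{[y]:\phi(x)(y)=0\}=\{[y]:\langle y,x\rangle=0\}$; hence $\delta$ coincides with the second of the two dualities induced by $\langle\,,\,\rangle$ displayed in the introduction.

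The main obstacle is securing the semilinear lift $\phi$ via the Fundamental Theorem of Projective Geometry: this is precisely where the hypothesis $d\ge 2$ is used, since in dimension $1$ every bijection between projective lines vacuously preserves incidence and no algebraic structure can in general be recovered. Once $\phi$ is in hand, the verifications of the sesquilinear identities, of the non-degeneracy, and of the coincidence with one of the two induced dualities are direct computations that require no further ideas.
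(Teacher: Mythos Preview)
Your argument is correct and follows the standard route to this classical fact: identify a duality with a collineation $\PG(V)\to\PG(V^*)$, invoke the Fundamental Theorem of Projective Geometry to obtain a semilinear bijection $\phi\colon V\to V^*$ (this is precisely where $d\ge 2$ is needed), and define the sesquilinear form via $\phi$. The paper itself does not give a proof of this theorem; it is quoted as background in the introduction, so there is no alternative argument to compare against.

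One cosmetic slip: with your convention $\langle x,y\rangle=\phi(y)(x)$, the map $[x]\mapsto\{[y]:\langle y,x\rangle=0\}$ is the \emph{first} of the two displayed dualities (in the paper's notation $Y\mapsto X_tAY^\sigma=0$, i.e.\ the map $Y\mapsto\{Y\}^\perp$), not the second. This has no bearing on the correctness of the proof.
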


\noindent A duality applied twice gives a collineation of $\PG(d,\F)$. 
If $\langle \mbox{ },\mbox{ } \rangle$ is given by $\langle x, y \rangle = X_t A Y^\sigma$, then the associated collineation is the following map:
\[ f_{A_t^{-1}A^\sigma, \sigma^2}:  X \mapsto A_t^{-1} A^\sigma X^{\sigma^2}  \mbox{ (see \cite{HugPip}).}\]
The most important and studied types of dualities are those whose square is the identity, namely the {\em polarities}.
Note that in this case $A_t^{-1}A^\sigma=\rho I$ and $\sigma^2=1$.\\

\begin{proposition}\textnormal{\cite{Bal,Cam}}
	A duality is a polarity if and only if the sesquilinear form defining it is reflexive.
\end{proposition}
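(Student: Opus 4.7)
The plan is to translate both properties into explicit conditions on the matrix $A$ and the automorphism $\sigma$, and then prove the equivalence at the matrix level. By the discussion preceding the statement, the duality induced by $\langle x, y\rangle = X_t A Y^\sigma$ is a polarity precisely when the associated collineation $X \mapsto A_t^{-1}A^\sigma X^{\sigma^2}$ is the identity of $\PG(d,\F)$, equivalently when $\sigma^2 = \mathrm{id}_{\F}$ and $A^\sigma = \rho A_t$ for some $\rho \in \F^\ast$. So the task reduces to showing that this pair of matrix conditions holds if and only if $\langle x, y\rangle = 0 \Rightarrow \langle y, x\rangle = 0$ for all $x, y \in V$.

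For the ``only if'' direction I would run a short matrix computation. From $\sigma^2 = \mathrm{id}$ and $A^\sigma = \rho A_t$, applying $\sigma$ to both sides gives $A = \rho^\sigma A_t^\sigma$, while transposing the original identity and comparing yields $\rho \cdot \rho^\sigma = 1$ and $A_t^\sigma = \rho A$. Since $\langle x, y\rangle$ is a scalar and hence equals its transpose, $\langle x, y\rangle = X_t A Y^\sigma = Y_t^\sigma A_t X$, and applying $\sigma$ gives $\langle x, y\rangle^\sigma = Y_t A_t^\sigma X^\sigma = \rho\, Y_t A X^\sigma = \rho \langle y, x\rangle$. Hence $\langle x, y\rangle = 0$ iff $\langle y, x\rangle = 0$, which is exactly reflexivity.

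The ``if'' direction is the main obstacle and is essentially the classical Birkhoff--von Neumann theorem. Assuming reflexivity, I would compare, for each nonzero $x \in V$, the two additive maps $f_x : y \mapsto \langle x, y\rangle$ (which is $\sigma$-semilinear in $y$) and $g_x : y \mapsto \langle y, x\rangle$ (which is linear in $y$). By non-degeneracy neither is identically zero, and by reflexivity they have the same kernel, a fixed hyperplane $H_x$ of $V$. A standard lemma on additive functionals $V \to \F$ vanishing on a common hyperplane forces the two maps to share a single companion automorphism and to differ only by a scalar; since one is $\sigma$-semilinear and the other linear, this pins down $\sigma^2 = \mathrm{id}$ and produces a function $\lambda : V \setminus \{0\} \to \F^\ast$ with $\langle y, x\rangle = \lambda(x)\, \langle x, y\rangle^\sigma$ for all $y$. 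Letting $x$ vary and using additivity of $\langle \cdot, \cdot\rangle$ in the first slot, one shows that $\lambda$ is in fact constant, equal to some $\rho \in \F^\ast$. Rewriting this as $Y_t A X^\sigma = \rho\, X_t^\sigma A^\sigma Y$ for all $X, Y$ and transposing then yields $A^\sigma = \rho A_t$, the polarity condition.

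The delicate piece is proving that $\lambda$ is constant: it requires choosing pairs $x, x'$ with $\langle x, x'\rangle \neq 0$ and playing additivity against semilinearity in a systematic way, and it is where essentially all of the real work of the theorem lies. The classification of reflexive forms into the symmetric, alternating, and $\sigma$-Hermitian cases emerges as a by-product of this argument, and any textbook proof of Birkhoff--von Neumann can be cited here (as is done via the references \cite{Bal,Cam}).
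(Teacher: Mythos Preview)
The paper does not supply its own proof of this proposition; it merely records the statement with citations to \cite{Bal,Cam}, and for the closely related classification theorem that follows it again writes ``For a proof of the previous theorem we refer to \cite{Bal} or \cite{Cam}.'' So there is no in-paper argument to compare against: your sketch already goes well beyond what the authors provide.

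Your outline is the standard Birkhoff--von Neumann route one finds in those references, and the ``only if'' computation is clean and correct. One small imprecision in the ``if'' sketch: from the fact that the linear functional $g_x(y)=\langle y,x\rangle$ and the $\sigma$-semilinear functional $f_x(y)=\langle x,y\rangle$ share a hyperplane kernel you obtain a relation of the form $\langle y,x\rangle = \lambda(x)\,\langle x,y\rangle^{\sigma^{-1}}$, not yet with exponent $\sigma$; the conclusion $\sigma^{2}=\mathrm{id}$ does not fall out of the single-hyperplane lemma by itself but needs the symmetric application with the roles of $x$ and $y$ swapped and a comparison of the two resulting identities. You rightly flag the constancy of $\lambda$ as the delicate step, and since you (like the paper) ultimately defer to \cite{Bal,Cam} for these details, your proposal is entirely in line with how the paper treats the result.
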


\noindent Non-degenerate, reflexive forms have been classified   long ago  (not only on finite fields but also on  ${\mathbb R}$ or  ${\mathbb C}$)  .
A reflexive $\sigma$-sesquilinear form $\langle , \rangle$ is:
\begin{itemize}
	\item  {\em $\sigma$-Hermitian} if $\langle y, x \rangle = \langle x , y \rangle^\sigma$, for all
	$x,y \in V.$ If $\sigma=1$ such a form is called {\em symmetric}. 
	\item
	{\em Alternating} if $\langle y , x \rangle = - \langle x , y \rangle$.
\end{itemize}

\begin{theorem}
	A non-degenerate, reflexive $\sigma$-sequilinear form is either alternating, or a scalar multiple of a $\sigma$-Hermitian 
	form. In the latter case, if $\sigma=1$ $($i.e. the form is  symmetric$)$, then the  scalar can be taken to be $1$.  
\end{theorem}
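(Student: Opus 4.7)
The plan is to use reflexivity to pin down a single scalar relating $\langle y,x\rangle$ and $\langle x,y\rangle$, and then to analyse that scalar. For each fixed $x\ne 0$, the map $y\mapsto\langle y,x\rangle$ is $\F$-linear, while $y\mapsto\langle x,y\rangle^{\sigma^{-1}}$ is also $\F$-linear (post-composing a $\sigma$-semilinear map with $\sigma^{-1}$ strips the twist). Reflexivity makes these two linear functionals vanish on the same set, and non-degeneracy ensures their common kernel is a proper hyperplane. Hence there is a unique scalar $c(x)\in\F^*$ with
$$\langle y,x\rangle \;=\; c(x)\,\langle x,y\rangle^{\sigma^{-1}} \qquad \text{for every } y\in V.$$

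The next step is to show that $c$ is constant. Substituting $ax$ for $x$ and invoking sesquilinearity produces $c(ax)=a^{\sigma-\sigma^{-1}}c(x)$. Substituting $x_1+x_2$ for $x$ and comparing coefficients of the linearly independent functionals $\langle x_i,\cdot\rangle^{\sigma^{-1}}$ (independence holds whenever $x_1,x_2$ are linearly independent, by non-degeneracy applied after $\sigma$) forces $c(x_1)=c(x_2)=c(x_1+x_2)$. Therefore $c$ is a constant $\lambda\in\F^*$, and the first identity collapses to $a^{\sigma-\sigma^{-1}}=1$ for every $a\in\F^*$, equivalently $\sigma^2=\mathrm{id}$. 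With $\sigma^{-1}=\sigma$ the defining equation reads $\langle y,x\rangle=\lambda\,\langle x,y\rangle^{\sigma}$, and iterating once forces the norm condition $\lambda\lambda^\sigma=1$.

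To finish, I split on whether $\sigma=1$. If so, $\lambda^2=1$ and $\lambda=\pm 1$: the value $+1$ gives a symmetric form, that is, $\sigma$-Hermitian with scalar factor $1$, while $-1$ yields an alternating form. If $\sigma\ne 1$, then $\F$ is a quadratic cyclic extension of its fixed field $\F^\sigma$, and Hilbert's Theorem~90 applied to $\lambda\lambda^\sigma=1$ produces $\nu\in\F^*$ with $\lambda=\nu^{\sigma-1}$; a short verification shows that the rescaled form $\nu\langle\cdot,\cdot\rangle$ is $\sigma$-Hermitian, exhibiting $\langle\cdot,\cdot\rangle$ itself as a scalar multiple of one. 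The step I expect to be most delicate is establishing that $c$ is genuinely constant: one must verify carefully that the functionals $\langle x_i,\cdot\rangle^{\sigma^{-1}}$ are independent, which reduces to non-degeneracy of the form after applying $\sigma$ and using linearity in the first slot. A minor additional check is required in characteristic two, where \emph{alternating} is strictly stronger than \emph{symmetric}; the Hilbert~90 input, however, is standard, and over the finite fields $\fqn$ of primary interest to this paper it amounts to surjectivity of the norm of a quadratic extension.
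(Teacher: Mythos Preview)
The paper does not supply its own proof of this theorem; immediately after the statement it simply refers the reader to Ball's book and Cameron's notes. Your sketch is precisely the standard Birkhoff--von~Neumann argument carried in those references: proportionality of the two linear functionals $y\mapsto\langle y,x\rangle$ and $y\mapsto\langle x,y\rangle^{\sigma^{-1}}$ gives a scalar $c(x)$, constancy of $c$ follows from non-degeneracy exactly as you outline, and the remaining case split via $\lambda\lambda^\sigma=1$ and Hilbert~90 is routine. So your proposal is correct and matches the cited literature; there is nothing in the paper itself to compare it against.
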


\noindent For a proof of the previous theorem we refer to \cite{Bal} or \cite{Cam}.
Note that in case of reflexive $\sigma$-sesquilinear forms, left- and right-radicals coincide. 
The set $\Gamma:X_t A X^\sigma=0$ of absolute points of a polarity is one of the well known objects in the projective space $\PG(d,\F)$: subspaces, quadrics and Hermitian varieties. 

\noindent We recall that  the classification theorem
for reflexive $\sigma$-sesquilinear forms also holds if the form is degenerate. 

\begin{remark}Let $\Gamma$ be the set of absolute points of a degenerate polarity in the projective space
	$\PG(d,\F)$, then $\Gamma$ is one of the following: subspaces, degenerate quadrics and degenerate Hermitian varieties. All of these sets are cones with vertex a subspace ${\cal V}$ of dimension
	$h$ (corresponding to $V^\perp$) and base a non-degenerate object of the same type in a subspace of dimension $d-1-h$
	skew with ${\cal V}$.
\end{remark}

\noindent  So everything is known for reflexive sesquilinear forms.
\\
Up to our knowledge (at least on finite fields) almost nothing is know on non-reflexive sesquilinear forms of $V=\fqn^{d+1}$.
Given a sesquilinear form of $V$ we may consider the induced correlation in $\PG(d,q^n)$ and the set $\Gamma$ of its {\em absolute}
points, that is the points $X$ such that $X\in {X}^\perp $ (or equivalently $X\in {X}^{\tang}$). 
If $\langle x, y \rangle = X_t A Y^\sigma$, then $\Gamma: X_t A X^\sigma =0$.
We will determine in the next sections the set $\Gamma$ in $\PG(1,q^n)$ and in $\PG(2,q^n)$.
\noindent 

Let $V=\fqn^{d+1}$, $d\in \{1,2\}$ and let $\Gamma:X_t A X^\sigma=0$, $\sigma \ne 1$, be the set of absolute points of a 
(possibly degenerate) correlation of $V$. 
We start with the following definition.
\begin{definition*}
	Let $\alpha$ be a $\sigma$-sesquilinear form of $\fqn^{d+1}$. A $\sigma$-{\em quadric} ($\sigma$-{\em conic} if $d=2$)
	of $\PG(d,q^n)$, $d\in \{1,2\}$ is the set of absolute points of the induced correlation of $\alpha$ in $\PG(d,q^n)$.
\end{definition*}

\noindent See  \cite{Dur} for the definition and the properties of $\sigma$-quadrics of $\PG(d,q^n)$ for every integer $d\ge 1$. It is an easy exercise (that we include in this paper) to determine the set $\Gamma$ in $\PG(1,q^n)$. 
For $d=2$ the set $\Gamma$  in $\PG(2,q^n)$ has been studied by B.C. Kestenband  in case $\langle \mbox{ } , \mbox{ }  \rangle$ is 
non-degenerate and the form is non-reflexive.  The results are contained in 10 different papers from 2000 to 2014.
We will  summarize some of his results in the last section.\\
Contrary to the reflexive case, we will see that in general the knowledge of the set $\Gamma$ of  absolute points of a correlation induced  by  a non-degenerate, non-reflexive sesquilinear  form will not help to determine the set $\Gamma$ in the degenerate case.
\\

\section{Absolute points of correlations of $PG(1,q^n)$}
In the sequel, we will determine the set of absolute points in $\PG(2,q^n)$  for a degenerate correlation induced by a degenerate $\sigma$-sesquilinear form
with associated automorphism $\sigma:x\mapsto x^{q^m}$, $(m,n)=1$, of $V=\fqn^3$.
\\
First assume that  $V= \fqn^2$  and consider the set of absolute points of a correlation induced by a  $\sigma$-sesquilinear form, that is the set of points in $\PG(1,q^n)$ given by $\Gamma:X_t AX^\sigma=0$, where 
\\
$$A=\left(
\begin{array}{cc}
a & b \\ c & d \end{array} \right) $$
\\
is a $2\times 2$-matrix over $\fqn$. Here again, let $\sigma$ be the automorphism $\sigma:x\mapsto x^{q^m}$, $(m,n)=1$, of $V=\fqn^2$.\\
The points of $\Gamma$ satisfy
\[ (ax_1+cx_2)x_1^\sigma+ (bx_1+dx_2)x_2^\sigma=0.\]

\begin{itemize}
	\item If $a=1, b=c=0, d\ne 0$, then $\Gamma:x_1^{\sigma+1}+dx_2^{\sigma+1}=0$.
	If $q$ is odd and $-d$ is a non-square of $\fqn$, then  $\Gamma=\emptyset$.
	If $q$ is even, $r=(q^n-1,q^m+1)$ and $d$ is an element of $\fqn$ such that $d^{\frac{q^n-1}{r}}\ne 1$, again 
	$\Gamma=\emptyset$. 
	\item If $a=1$, $b=c=d=0$, then $\Gamma:x_1^{\sigma+1}=0$ is the point $(0,1)$.
	\item If $b=1, a=c=d=0$, then $\Gamma:x_1x_2^\sigma=0$ is the union of the points $(1,0), (0,1)$.
	\item From the examples above, we see that $\Gamma$ can be the empty set, $\Gamma$ can be a point or the union of two points.  Hence we may assume that  $|\Gamma|>2$ and so we may suppose  that $(1,0),(0,1),(1,1)\in\Gamma$, therefore $a=d=0$, $b=-c$. So
	\[ \Gamma: x_1x_2(x_1^{\sigma-1}-x_2^{\sigma-1})=0.\]
	Hence $\Gamma$ is given by the union of $\{(1,0),(0,1)\}$ and the set of points $(x_1,x_2)$ in $\PG(1,q^n)$ such that $(\frac{x_2}{x_1})^{\sigma-1}=1$. Hence, $\Gamma$ is the set of points of
	an $\fq$-subline $\PG(1,q)$ of  $\PG(1,q^n)$.
\end{itemize}

\begin{lemma}
	The set of the absolute points in $\PG(1,q^n)$ of a $($possibly degenerate$)$ correlation induced by a $\sigma$-sesquilinear form of $\fqn^2$ is  one of the following:
	\begin{itemize}
		\item  the empty set, a point, two points;
		\item  an $\fq$-subline $\PG(1,q)$ of $\PG(1,q^n)$.
	\end{itemize}
\end{lemma}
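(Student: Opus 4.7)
My plan is to complete the case analysis started in the excerpt. Starting from the general equation
\[\Gamma: (ax_1+cx_2)x_1^\sigma + (bx_1+dx_2)x_2^\sigma = 0,\]
the first bullet follows from exhibiting matrices $A$ producing each of the three small configurations, and the three examples already worked out in the excerpt do exactly this. The substantive task is to show that if $|\Gamma|\ge 3$ then $\Gamma$ is an $\fq$-subline.

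For this I would exploit the sharp $3$-transitivity of $\PGL(2,q^n)$ on $\PG(1,q^n)$. A linear change of coordinates $X\mapsto P^{-1}X$ conjugates $X_t A X^\sigma$ into $X_t\bigl((P^{-1})_t\, A\, (P^{-1})^\sigma\bigr)X^\sigma$, which is another $\sigma$-sesquilinear form over $\fqn$ whose absolute-point set is the image $P(\Gamma)$. Hence, starting from three absolute points, I may assume $(1,0),(0,1),(1,1)\in\Gamma$. Substituting these points into the defining equation successively yields $a=0$, $d=0$, and $b+c=0$. Rescaling $b$ to $1$ (the alternative $b=0$ makes $A=0$ and carries no correlation), I obtain
\[\Gamma: x_1 x_2^\sigma - x_1^\sigma x_2 \;=\; x_1 x_2\bigl(x_2^{\sigma-1}-x_1^{\sigma-1}\bigr) \;=\; 0.\]

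Apart from $(1,0)$ and $(0,1)$, the remaining locus is $t^{\sigma-1}=t^{q^m-1}=1$ with $t=x_2/x_1\in\fqn^*$. Since $(m,n)=1$, the solutions lie in $\fqn^*\cap\F_{q^m}^*=\F_{q^{\gcd(m,n)}}^*=\fq^*$, contributing $q-1$ additional points and yielding in total the $q+1$ points of an $\fq$-subline $\PG(1,q)\subset\PG(1,q^n)$. The only non-routine step is the $3$-transitivity reduction together with the verification that conjugating the matrix preserves the class of sesquilinear-form-induced correlations; once these are in place, the rest is direct substitution and the standard fact that the fixed field of $x\mapsto x^{q^m}$ inside $\fqn$ is $\F_{q^{\gcd(m,n)}}$.
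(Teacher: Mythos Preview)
Your proof is correct and follows essentially the same approach as the paper: both reduce via a projective change of coordinates (using the triple transitivity of $\PGL(2,q^n)$) to the case $(1,0),(0,1),(1,1)\in\Gamma$, deduce $a=d=0$ and $b=-c$, and then identify the remaining locus $(x_2/x_1)^{q^m-1}=1$ with the standard $\fq$-subline. If anything, you are slightly more explicit than the paper about why the coordinate change preserves the class of $\sigma$-sesquilinear forms and about the role of $\gcd(m,n)=1$ in intersecting $\F_{q^m}$ with $\fqn$.
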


\begin{definition}
	A $\sigma$-quadric $\Gamma$ of $\PG(1,q^n)$ is {\em non-degenerate} if $|\Gamma|\in \{0,2,q+1\}$,
	otherwise it is {\em degenerate}.
	A $\sigma$-conic $\Gamma$ of $\PG(2,q^n)$ is {\em non-degenerate} if  $V^\perp\cap V^{\tang} = \{\underline{0}\}$ 
	and $\Gamma$ does not contains lines. Otherwise, it is {\em degenerate}.
\end{definition}

\noindent Note that if $\Gamma$ is a degenerate $\sigma$-quadric of $\PG(1,q^n)$, then there exists a degenerate sesquilinear form such that $V^\perp = V^{\tang}$ with $\dim(V^\perp)=1$.

\noindent Next we determine the possible intersection configurations of a line and a $\sigma$-conic of $\PG(2,q^n)$. So let  $\Gamma$ be the set of absolute points in $\PG(2,q^n)$ of a (possibly degenerate) correlation induced by a $\sigma$-sesquilinear form of $\fqn^{3}$,  $\sigma:x\mapsto x^{q^m}, (m,n)=1$. Hence $\Gamma: X_t A X^\sigma=0$.

\begin{proposition}\label{pr_line}
	Every line intersects a $\sigma$-conic  $\Gamma$ of $\PG(2,q^n)$ in either 
	$0$ or $1$ or $2$ or $q+1$ points $($an $\fq$-subline$)$ or it is contained in $\Gamma$. 
\end{proposition}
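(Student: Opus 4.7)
The plan is to reduce the statement to the one-dimensional case handled by the previous Lemma by restricting the sesquilinear form to the line. Fix a line $\ell$ of $\PG(2,q^n)$ and pick two distinct points $P,Q$ on it, so that every point of $\ell$ is represented by a vector $\lambda P+\mu Q$ for some $(\lambda,\mu)\in\fqn^2\setminus\{\underline 0\}$. First I would substitute this parametrization into $X_t A X^\sigma=0$. Using that $\sigma$ acts coordinate-wise (so $(\lambda P+\mu Q)^\sigma=\lambda^\sigma P^\sigma+\mu^\sigma Q^\sigma$) together with the linearity on the left, the expression expands as
\[
a'\lambda^{\sigma+1}+b'\lambda\mu^\sigma+c'\mu\lambda^\sigma+d'\mu^{\sigma+1},
\]
with $a'=P_tAP^\sigma$, $b'=P_tAQ^\sigma$, $c'=Q_tAP^\sigma$, $d'=Q_tAQ^\sigma$. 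This is precisely the $\sigma$-sesquilinear form $Y_t A' Y^\sigma$ on $\fqn^2$ associated with the matrix $A'=\left(\begin{smallmatrix}a'&b'\\c'&d'\end{smallmatrix}\right)$, and the map $(\lambda,\mu)\mapsto \lambda P+\mu Q$ induces a projective bijection between $\PG(1,q^n)$ and $\ell$ that sends the absolute points of the correlation defined by $A'$ onto $\ell\cap\Gamma$.

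From here I would split into two cases according to whether $A'=0$. If $A'\neq 0$, the previous Lemma applies verbatim to the induced correlation on $\PG(1,q^n)$, and transporting the conclusion along the parametrization gives that $\ell\cap\Gamma$ is either empty, a single point, two points, or an $\fq$-subline of $\ell$. If instead $A'=0$, then the restricted form vanishes identically, which means every point of $\ell$ is absolute, i.e.\ $\ell\subseteq\Gamma$. Combining the two cases yields exactly the five possibilities claimed by the proposition.

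No real obstacle is expected here: the argument is essentially a change of coordinates followed by a citation of the Lemma. The only mild subtlety is keeping track of the all-zero case for $A'$, since the Lemma as stated treats only non-trivial forms and does not explicitly list the possibility that the entire line lies in $\Gamma$. Noting that this case is simply the trivial one where the restricted form is zero closes the gap and finishes the proof.
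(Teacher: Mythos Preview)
Your proof is correct and follows essentially the same route as the paper: parametrize the line by two points, expand $X_tAX^\sigma$ to obtain a $\sigma$-sesquilinear form in $(\lambda,\mu)$, and invoke the one-dimensional classification. If anything, you are slightly more explicit than the paper in isolating the degenerate case $A'=0$ that yields $\ell\subseteq\Gamma$, whereas the paper absorbs this silently into the phrase ``$\sigma$-quadric on the line $\ell$''.
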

\begin{proof}
Let $Y,Z$ be two distinct points of $\PG(2,q^n)$ and let $\ell: X=\lambda Y + \mu Z$, $(\lambda,\mu)\in \fqn^2\setminus \{(0,0)\}$ be the line containing them. The number of points of $\Gamma \cap \ell$ is given by the number of points of a
$\sigma$-quadric on the line $\ell$. Indeed a point $ X=\lambda Y + \mu Z\in \ell\cap \Gamma$ if and only if $a \lambda^{\sigma+1} + b \lambda \mu^\sigma + c \lambda^\sigma \mu
+ d \mu^{\sigma+1}=0$, where $a=Y_t A Y^\sigma, b=Y_t AZ^\sigma, c=Z_t A Y^\sigma, d=Z_t AZ^\sigma$. The assertion follows.
\end{proof}

\section{Steiner's projective generation of conics and $C_F^m$-sets}
Before determining the sets of absolute points of correlations of $\PG(2,q^n)$ we will recall some subsets of $\PG(2,q^n)$,
called (possibly degenerate) $C_F^m$-sets,
that have been introduced and studied in \cite{DonDur03,DonDur05,DonDur14}  generalizing the constructions of
conics due to J. Steiner. We will see that $\sigma$-conics of $\PG(2,q^n)$ with $|A|=0$, different form a cone with vertex a point and base a $\sigma$-quadric of $\PG(1,q^n)$ will be a (possibly degenerate) $C_F^m$-set.
First we recall J. Steiner's construction of conics. Let $\mathbb{F}$ be a field and let $\PG(2,\mathbb{F})$ be a Desarguesian projective plane.
Let $R$ and $L$ be two distinct points of $\PG(2,\mathbb{F})$ and let ${\cal P}_R$ and $ {\cal P}_L$ be the pencils of lines
with centers $R$ and $L$, respectively.  In 1832 in \cite{Ste} J. Steiner  proves the following:

\begin{theorem}
	The set of $\Gamma$ of points of intersection of corresponding lines under  a projectivity $\Phi : {\cal P}_R \longrightarrow {\cal P}_L$
	is one the following:
	\begin{itemize}
		\item if $\Phi(RL) \ne RL$, then $\Gamma$ is a non-degenerate conic;
		\item if $\Phi(RL) = RL$, then $\Gamma$ is a degenerate conic.
	\end{itemize}
\end{theorem}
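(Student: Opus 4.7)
The plan is a direct coordinate computation. I would choose homogeneous coordinates so that $R=(1,0,0)$ and $L=(0,1,0)$, making $RL$ the line $X_3=0$. Every line of $\cP_R$ is then of the form $r_{\alpha,\beta}:\alpha X_2+\beta X_3=0$ with $(\alpha:\beta)\in\PG(1,\F)$, and every line of $\cP_L$ is of the form $s_{\gamma,\delta}:\gamma X_1+\delta X_3=0$; in both pencils the line $RL$ corresponds to the parameter $(0:1)$. A projectivity $\gF:\cP_R\to\cP_L$ is then given by $(\gamma:\delta)=(a\alpha+b\beta:c\alpha+d\beta)$ for some invertible matrix, i.e.\ with $ad-bc\neq 0$.

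Next I would eliminate $\alpha,\beta$ from the simultaneous system $\alpha X_2+\beta X_3=0$ and $(a\alpha+b\beta)X_1+(c\alpha+d\beta)X_3=0$ defining $r_{\alpha,\beta}\cap\gF(r_{\alpha,\beta})$. Substituting $\beta/\alpha=-X_2/X_3$ into the second equation and clearing denominators shows that $\Gamma$ is exactly the zero locus of the quadratic form
\[
-bX_1X_2 + aX_1X_3 - dX_2X_3 + cX_3^2 = 0.
\]
A short case analysis of the boundary conditions $\alpha=0$, $\beta=0$, and $X_3=0$ confirms that no points are missed or added: in particular $R$ arises as $\gF^{-1}(RL)\cap RL$ and $L$ as $RL\cap\gF(RL)$.

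Finally, one reads off the dichotomy. The condition $\gF(RL)=RL$ translates to $\gF$ fixing the parameter $(0:1)$, hence to $b=0$. If $\operatorname{char}\F\neq 2$, the symmetric matrix of the displayed quadratic form has determinant proportional to $b(ad-bc)$, which is non-zero precisely when $b\neq 0$; therefore $\Gamma$ is a non-degenerate conic iff $\gF(RL)\neq RL$. If instead $b=0$, the equation factors as
\[
X_3\bigl(aX_1-dX_2+cX_3\bigr)=0,
\]
exhibiting $\Gamma$ as the union of $RL$ and a second line, so $\Gamma$ is a degenerate conic; this factorisation also treats characteristic $2$ uniformly, where the determinantal criterion is not available.

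The main obstacle is not any single step of the computation but the careful bookkeeping around the boundary cases: verifying that the coordinate description captures $\Gamma$ set-theoretically when one of $\alpha,\beta,X_3$ vanishes, and, in the non-degenerate branch, checking that the assignment $r\mapsto r\cap\gF(r)$ is actually a bijection between $\cP_R$ and $\Gamma\setminus\{R\}$. The latter is what separates a genuine projectivity from an arbitrary correspondence between the two pencils and is essentially what guarantees that $\Gamma$ is not further degenerate when $b\neq 0$.
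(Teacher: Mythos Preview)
The paper does not prove this theorem; it is quoted as Steiner's classical 1832 result and simply cited to \cite{Ste}. Your coordinate argument is correct and is in fact exactly the template the paper itself follows in Section~4 for the $\sigma$-sesquilinear analogue (there with $R=(1,0,0)$, $L=(0,0,1)$, a collineation in place of the projectivity, and the $C_F^m$ versus degenerate $C_F^m$ dichotomy playing the role of your $b\neq 0$/$b=0$ split).

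One small gap remains: in characteristic~$2$ you only treat the branch $b=0$ via the explicit factorisation; for $b\neq 0$ you appeal to the determinant of the symmetric Gram matrix, which is not the correct invariant there. To close this uniformly, check directly that $-bX_1X_2+aX_1X_3-dX_2X_3+cX_3^2$ is absolutely irreducible whenever $b\neq 0$ and $ad-bc\neq 0$: any factorisation into two linear forms forces the $X_1^2$ and $X_2^2$ coefficients to vanish, and equating the remaining four coefficients then yields $bc=ad$, a contradiction valid in every characteristic.
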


\noindent Inspired by Steiner's projective generation of conics, in \cite{DonDur03,DonDur05,DonDur14}  the following definitions are given.

\begin{definition}
	The set $\Gamma \subset \PG(2,q^n)$ of points of intersection of corresponding lines under  a collineation $\Phi : {\cal P}_R \longrightarrow {\cal P}_L$  with accompanying automorphism $x\mapsto x^{q^m}$ is one the following:
	\begin{itemize}
		\item if $\Phi(RL) \ne RL$, then $\Gamma$ is called a $C_F^m$-set with {\em vertices} $R$ and $L$;
		\item if $\Phi(RL) = RL$, then $\Gamma$ is  called a {\em degenerate} $C_F^m$-set with {\em vertices} $R$ and $L$. 
	\end{itemize}
\end{definition}

\noindent Moreover in \cite{DonDur14} the following is proved:

\begin{theorem}
	A $C_F $-set $\Gamma$ of $\PG(2,q^n)$ with vertices $R=(1,0,0)$ and $L=(0,0,1)$ has canonical equation $x_1x_3^{q^m} -x_2^{q^m+1}=0$. Hence $|\Gamma|=q^n+1$ and it is a set of type $(0,1,2,q+1)$ w.r.t. lines. It is the union of $\{R,L\}$ with $q-1$ pairwise disjoint scattered $\fq$-linear sets of pseudoregulus type, each of which is isomorphic to
	the set of points and lines of  a $\PG(n-1,q)$, that is called a {\em component} of $\Gamma$. 
\end{theorem}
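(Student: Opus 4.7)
My plan is first to reduce the defining collineation $\Phi:\cP_R\to\cP_L$ to a canonical form. Parametrizing the lines through $R=(1,0,0)$ by $(\alpha:\beta)\in\PG(1,q^n)$ via $\alpha x_2+\beta x_3=0$, and those through $L=(0,0,1)$ by $(\gamma:\delta)$ via $\gamma x_1+\delta x_2=0$, the map $\Phi$ is represented by an invertible semilinear transformation of $\fqn^2$ with companion $x\mapsto x^{q^m}$. A suitable change of projective bases on each pencil brings $\Phi$ to the normal form $(\alpha:\beta)\mapsto(\alpha^{q^m}:\beta^{q^m})$; the non-degeneracy hypothesis $\Phi(RL)\neq RL$ is exactly what makes this reduction possible. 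Solving the system $\alpha x_2+\beta x_3=0$, $\alpha^{q^m}x_1+\beta^{q^m}x_2=0$ then yields, up to scalar, the intersection point $(-\beta^{q^m+1},\,\alpha^{q^m}\beta,\,-\alpha^{q^m+1})$, which a direct substitution shows satisfies $x_1x_3^{q^m}-x_2^{q^m+1}=0$. The boundary cases $\alpha=0$ and $\beta=0$ produce $R$ and $L$ respectively, both of which also lie on the variety, so $\Gamma$ equals its zero set.

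The parametrization $\PG(1,q^n)\to\Gamma$ just constructed is injective, because two distinct lines through $R$ meet only at $R$, which corresponds to the unique parameter $(0:1)$; hence $|\Gamma|=q^n+1$. Proposition \ref{pr_line} restricts the possible intersection types of lines with $\Gamma$ to $0,1,2,q+1$, or a whole line. The last case is excluded by a cardinality argument: a line contained in $\Gamma$ would have $q^n+1$ points and thus would coincide with $\Gamma$ itself, contradicting the fact that $\Gamma$ contains the three non-collinear points $R$, $L$ and $(1,1,1)$.

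The delicate part is the partition into $q-1$ components. For this I would employ the norm map $N:\fqn^*\to\fq^*$, $y\mapsto y^{(q^n-1)/(q-1)}$, whose $q-1$ fibers each have cardinality $(q^n-1)/(q-1)$. Via the parametrization $y\mapsto\langle(1,y^{q^m},y^{q^m+1})\rangle$ of $\Gamma\setminus\{R,L\}$, each fiber furnishes a candidate component $C_i$ of the required size. The main obstacle is to verify that each $C_i$ is a scattered $\fq$-linear set of pseudoregulus type: concretely, one must exhibit an $n$-dimensional $\fq$-subspace $U_i\subset\fqn^3$ with $C_i=\{\langle u\rangle_{\fqn}:u\in U_i\setminus\{0\}\}$, check scatteredness by showing that $U_i$ meets every $\fqn$-line of $\fqn^3$ in at most a one-dimensional $\fq$-subspace, and match $U_i$ with the standard pseudoregulus description. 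The isomorphism between $C_i$ and $\PG(n-1,q)$ then follows by transporting the $\fq$-projective structure from $U_i$. For these verifications I would rely on the explicit constructions carried out in \cite{DonDur14}, where the underlying subspaces and the associated Desarguesian spread structure are worked out in detail.
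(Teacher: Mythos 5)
The paper does not prove this theorem at all: it is quoted as a known result of \cite{DonDur14}, so there is no internal argument to compare yours with, and your proposal must stand on its own. Your first two paragraphs are essentially sound. For the normal form, the one step you assert rather than prove is the reduction itself: the admissible changes of parameters are only those pairs of projectivities of $\cP_R$ and $\cP_L$ induced by a projectivity of the plane fixing $R$ and $L$, and these are exactly the pairs fixing the element $RL$ in each pencil; writing $\Phi$ as an invertible matrix $A'$ composed with $x\mapsto x^{q^m}$, the reduction to $(\alpha:\beta)\mapsto(\alpha^{q^m}:\beta^{q^m})$ amounts to a factorization of $A'$ as (lower triangular)$\cdot$(upper triangular), which exists precisely when the $(1,1)$ entry of $A'$ is nonzero, i.e.\ precisely when $\Phi(RL)\neq RL$ --- so your parenthetical remark is correct but should be made explicit. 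The computation of the intersection points, the injectivity argument giving $|\Gamma|=q^n+1$, and the exclusion of a line contained in $\Gamma$ via Proposition~\ref{pr_line} and the non-collinear points $R,L,(1,1,1)$ are fine (strictly, ``type $(0,1,2,q+1)$'' also requires exhibiting lines realizing each intersection number, which you do not address, but this is minor).

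The genuine gap is your third paragraph, which is the actual substance of the statement: you correctly identify the decomposition by norm fibres, but then defer the verification that each fibre is a scattered $\fq$-linear set of pseudoregulus type isomorphic to $\PG(n-1,q)$ to \cite{DonDur14} --- yet that is exactly the theorem being proved, so as a self-contained proof this is circular at its crucial point. The gap is, however, easy to close along the lines you sketch. Fix $c$ with $N(c)=a$; by Hilbert 90 the fibre $\{t: N(t)=a\}$ equals $\{c\,s^{q^m-1}: s\in\fqn^*\}$, and substituting $t=c\,s^{q^m-1}$ into $(t^{q^m+1},t,1)$ and rescaling by $s$ gives $\cC_a=\{\langle(c^{q^m+1}s^{q^{2m}},\,c\,s^{q^m},\,s)\rangle : s\in\fqn^*\}$, i.e.\ $\cC_a=L_{U_a}$ for the rank-$n$ $\fq$-subspace $U_a=\{(c^{q^m+1}s^{q^{2m}},c\,s^{q^m},s): s\in\fqn\}$. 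If two such vectors are $\fqn$-proportional with factor $\lambda$, comparing the last two coordinates forces $\lambda^{q^m-1}=1$, hence $\lambda\in\fq$ because $(m,n)=1$; this gives scatteredness, shows $|\cC_a|=(q^n-1)/(q-1)$, identifies $U_a$ with the standard presentation of a pseudoregulus-type set with transversal points $R$ and $L$ as in \cite{Lun-Mar-Pol-Tro} (compare the points $(x^{q^{2m}},x^{q^m},x)$ used later in the paper for $\cC_1$), and pairwise disjointness of the $q-1$ components follows from the injectivity of your parametrization together with the fact that the norm fibres partition $\fqn^*$. With that substitution written out, your outline becomes a complete proof; as submitted, the key claim is only delegated to the reference.
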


\begin{theorem}
	A degenerate $C_F^m$-set of $\PG(2,q^n)$  with vertices $R=(1,0,0)$ and $L=(0,1,0)$ has canonical equation $x_3 (x_1x_3^{q^m-1} -x_2^{q^m})=0$. Here $|\Gamma|=2q^n+1$ and it is a set of type $(1,2,q+1,q^n+1)$ w.r.t. lines. It is the union of the line $RL$ with a set ${\cal A}$ of $q^n$ affine points  isomorphic to $\AG(n,q)$ whose directions on the line $RL$
	form a maximum scattered $\fq$-linear set ${\cal S}$ of pseudoregulus type with transversal points $R$ and $L$.
	Moreover   ${\cal A} \cup {\cal S}$ is a maximum scattered $\fq$-linear set of rank $n+1$, that is a
	blocking set of R\'edei type and vice versa.
\end{theorem}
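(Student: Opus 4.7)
The plan is to coordinatize the collineation $\Phi:\cP_R\to\cP_L$, reduce it to a canonical form by admissible projective changes, and then read off the geometry of $\Gamma$ from the resulting equation. A line of $\cP_R$ through a second point $(\ast,b,c)$ is labelled by the slope $[b:c]\in\PG(1,q^n)$, and a line of $\cP_L$ through $(d,\ast,f)$ by $[d:f]$; in both parametrizations the line $RL$ corresponds to the slope $[1:0]$. A collineation $\Phi$ with accompanying automorphism $\sigma:x\mapsto x^{q^m}$ therefore takes the form $\Phi([b:c])=[\alpha b^\sigma+\beta c^\sigma:\gamma b^\sigma+\delta c^\sigma]$ with $\alpha\delta-\beta\gamma\ne0$, and the hypothesis $\Phi(RL)=RL$ forces $\gamma=0$ (so in particular $\alpha,\delta\ne 0$). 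A point $P=(x_1,x_2,x_3)$ is absolute iff $\Phi$ maps the line $RP$ (slope $[x_2:x_3]$) to the line $LP$ (slope $[x_1:x_3]$); expanding this condition gives
\[
x_3\bigl(\alpha x_2^{q^m}+\beta x_3^{q^m}-\delta x_1 x_3^{q^m-1}\bigr)=0.
\]
A shear $x_1\mapsto x_1+\lambda x_3$ and a scaling $x_1\mapsto\mu x_1$ (both of which fix $R$ and $L$) reduce this, with suitable choices of $\lambda$ and $\mu$, to the canonical form $x_3(x_1x_3^{q^m-1}-x_2^{q^m})=0$.

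From here $\Gamma=RL\cup\cA$ with $\cA=\{(t^{q^m},t,1):t\in\fqn\}$. Since $t\mapsto t^{q^m}$ is a bijection of $\fqn$, $|\cA|=q^n$ and hence $|\Gamma|=2q^n+1$; viewing $\fqn$ as an $n$-dimensional $\fq$-vector space, $\cA$ is evidently isomorphic to $\AG(n,q)$. To determine the line-type, let $\ell\ne RL$. If $\ell$ passes through $R$ or $L$, it meets $RL$ in that vertex and meets $\cA$ in exactly one point (again by bijectivity of the Frobenius), contributing two points. For $\ell:ax_1+bx_2+cx_3=0$ with $ab\ne0$, $\ell\cap\cA$ is the solution set of the $\fq$-affine equation $at^{q^m}+bt+c=0$, while $\ell\cap RL=\{[b:-a:0]\}$. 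The kernel of the $\fq$-linear map $t\mapsto at^{q^m}+bt$ has $\fq$-dimension $0$ or $1$ according as $-b/a$ fails to be or is a $(q^m-1)$-th power of some element of $\fqn^\ast$; as shown in the next paragraph, the second alternative is precisely the condition $[b:-a:0]\in\cS$. Combining the cases, $|\Gamma\cap\ell|\in\{1,2,q+1,q^n+1\}$, giving the announced line-type.

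For the direction set, any two distinct points $(t_i^{q^m},t_i,1)$ of $\cA$ determine on $RL$ the point $((t_1-t_2)^{q^m},t_1-t_2,0)$, so $\cS=\{[t^{q^m}:t:0]:t\in\fqn^\ast\}$; note that $[b:-a:0]=[t^{q^m}:t:0]$ is equivalent to $-b/a=t^{q^m-1}$, corroborating the forward reference above. The set $\cS$ is the $\fq$-linear set associated with $U=\{(t^{q^m},t):t\in\fqn\}\subset\fqn^2$; by the coprimality $(m,n)=1$, $t^{q^m-1}\in\fq$ iff $t\in\fq$, so the $\fqn$-spans of $(t^{q^m},t)$ and $(s^{q^m},s)$ coincide only when $t/s\in\fq^\ast$. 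Thus $\cS$ is scattered of size $(q^n-1)/(q-1)$, and — being the image of the pseudoregulus $\fq$-linear map $t\mapsto t^{q^m}$ — is of pseudoregulus type with transversal points $R$ and $L$. Finally, $\cA\cup\cS=L_W$ for $W=\{(s^{q^m},s,r):s\in\fqn,\,r\in\fq\}$, an $\fq$-subspace of $\fqn^3$ of rank $n+1$. The count
\[
|\cA\cup\cS|=q^n+\frac{q^n-1}{q-1}=\frac{q^{n+1}-1}{q-1}
\]
equals the scattered maximum, so $L_W$ is a maximum scattered $\fq$-linear set of $\PG(2,q^n)$. Being the union of the affine set $\cA$ with its determined directions on the line $RL$, it is tautologically a R\'edei-type blocking set with R\'edei line $RL$; the converse — that every maximum scattered $\fq$-linear set of $\PG(2,q^n)$ arises in this way — follows from the standard theory of scattered linear blocking sets.

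The principal difficulty lies in the first step: verifying that the residual projective freedom (namely the stabilizer of the ordered pair $(R,L)$ inside $\PGL(3,q^n)$) is large enough to absorb both $\beta$ and $\delta$ after $\gamma$ has been forced to vanish. Once the canonical equation $x_3(x_1x_3^{q^m-1}-x_2^{q^m})=0$ is secured, the remaining assertions reduce to bijectivity of the Frobenius on $\fqn$ combined with standard properties of pseudoregulus-type $\fq$-linear sets.
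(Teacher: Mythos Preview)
The paper does not actually prove this theorem in its own text; it is quoted as a result from \cite{DonDur14} (introduced by ``Moreover in \cite{DonDur14} the following is proved''). Hence there is no in-paper argument to compare against, and your proposal stands on its own as a complete proof.

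Your argument is correct. The parametrization of $\cP_R$ and $\cP_L$ by slopes, the reduction $\gamma=0$ from $\Phi(RL)=RL$, and the normalization via the shear $x_1\mapsto x_1+\lambda x_3$ (absorbing $\beta$) followed by the rescaling $x_1\mapsto(\alpha/\delta)x_1$ (absorbing $\delta$) are all valid and yield the canonical equation. The line-type computation is accurate: the key point, that the $\fq$-kernel of $t\mapsto at^{q^m}+bt$ has dimension $0$ or $1$ according as $-b/a$ is not or is a $(q^m-1)$-th power, together with the identification of that condition with $[b:-a:0]\in\cS$, is exactly right and uses $\gcd(q^m-1,q^n-1)=q-1$ implicitly via $(m,n)=1$. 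The description of $\cA\cup\cS$ as $L_W$ with $W=\{(s^{q^m},s,r):s\in\fqn,\ r\in\fq\}$ is correct (here $r\in\fq$ is essential so that $[s^{q^m}:s:r]=[(s/r)^{q^m}:s/r:1]$ when $r\ne0$), and the cardinality count shows scatteredness.

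One small caution: your reading of the ``vice versa'' as ``every maximum scattered $\fq$-linear set of $\PG(2,q^n)$ arises in this way'' is too strong, since there exist maximum scattered linear sets of rank $n+1$ in $\PG(2,q^n)$ that are \emph{not} of pseudoregulus type. The intended meaning in \cite{DonDur14} is the standard equivalence, for $\fq$-linear sets of rank $n+1$ in $\PG(2,q^n)$, between being maximum scattered and being a blocking set of R\'edei type; you may simply cite that equivalence rather than the stronger (false) claim.
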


\noindent We refer to \cite{DonDur14} and to \cite{Lun-Mar-Pol-Tro} for the definition and properties of the relevant objects in the previous two theorems such as maximum scattered $\fq$-linear set
and blocking set of R\'edei type.

\noindent In the next section we will see that the set of absolute points of a degenerate correlation of $\PG(2,q^n)$ induced by
a degenerate, non-reflexive $\sigma$-sequilinear form of $\fqn^3$ is either a $C_F^m$-set or a degenerate $C_F^m$-set
or a cone with vertex a point $R$ and base a $\sigma$-quadric of a line $\ell$ not through $R$.

\section{Absolute points of degenerate correlations of $\PG(2,q^n)$}

\noindent In this section we determine the possible structure for the set of  absolute points in $\PG(2,q^n)$ of a degenerate correlation induced by a $\sigma$- sesquilinear form in $\fqn^3$. Let $\Gamma:X_t A X^\sigma=0$ be such  a set of points.
For the sequel we can assume $\sigma \ne 1$. Indeed if $\sigma =1$, then $\Gamma$ is always a (possibly degenerate)
conic or the full pointset (a degenerate symplectic  geometry). In what follows, $\sigma$, if not differently specified, will denote always  the map $x\mapsto x^{q^m}, (m,n)=1$.
\\
First assume $rank(A)=2$, then $V^\perp$ and $V^{\tang}$ are one-dimensional vector spaces of $V$, so points 
of $\PG(2,q^n)$. If $V^\perp \ne V^{\tang}$, then  we may assume that the point $R=(1,0,0)$ is the right-radical and the point $L=(0,0,1)$ is the  left-radical. It follows that
\\
$$A=\left(
\begin{array}{ccc}
0 & a & b \\ 0 & c & d \\ 0 & 0 & 0 \end{array} \right) $$
\\
and 
\[\Gamma:X_tAX^\sigma = (ax_1+cx_2)x_2^\sigma+(bx_1+dx_2)x_3^\sigma=0.\]
Consider the pencil of lines through the point $R$, that is 
\[{\cal P}_R=\{\ell_{\alpha,\beta}:(\alpha,\beta) \in \fqn\times\fqn\setminus \{(0,0\}\},   \mbox{ where }\] 
\[\ell_{\alpha,\beta}: 
\left\{
\begin{array}{ccc}
x_1 & = & \lambda \\
x_2 & = & \mu \alpha \\
x_3 & = & \mu \beta \end{array}, (\lambda,\mu) \in \fqn\times\fqn\setminus \{(0,0)\}\right.\]
and the pencil of lines through $L$, that is 
\[{\cal P}_L=\{\ell'_{\alpha',\beta'}:   (\alpha',\beta') \in \fqn\times\fqn\setminus \{(0,0\}\},  \;\;\; \mbox{ where } \;\;\;    \ell'_{\alpha',\beta'}: \alpha' x_1+\beta'x_2=0.\ \] 
Let 
\[\Phi: {\cal P}_R \longrightarrow {\cal P}_L \] 
be the collineation between ${\cal P}_R$ and ${\cal P}_L$ given by $\Phi(\ell_{\alpha,\beta}) = \ell'_{\alpha',\beta'}$, where
\[ (\alpha',\beta')_t = A' (\alpha, \beta)_t^\sigma.	\]
where $A'$ is the matrix
\\
$$A'=\left(
\begin{array}{cc}
a & b \\ c & d \end{array} \right). $$
\\
Note that $|A'|\ne 0$ since $rank(A)=2$. It is easy to see that $\Gamma$ is the set of points of intersection of corresponding lines under the collineation $\Phi$ and hence it is a  (possibly degenerate) $C_F^m$-set (see \cite{DonDur03,DonDur05,DonDur14}).
\\
Let $Y=(y_1,y_2,y_3)$ be a point of $\Gamma$, then the tangent line $t_Y$ to $\Gamma$ at the point $Y$ is the
line 
\[t_Y: (ay_2^\sigma+by_3^\sigma)x_1+(cy_2^\sigma+dy_3^\sigma)x_2=0.\]
So for every point $Y$ of $\Gamma$ the line $t_Y$ contains the point $L=(0,0,1)$.
The tangent line $t_L$ to $\Gamma$ at the point $L$ is the line
\[t_L:bx_1+dx_2=0.\]
First assume that $t_L=RL:x_2=0$, then $b=0$ and we can put $d=1$ obtaining
\[\Gamma: (ax_1+cx_2)x_2^\sigma+x_2x_3^\sigma=0,\] with $a\ne 0$, so
\[\Gamma: x_2(ax_1x_2^{\sigma-1}+cx_2^\sigma+x_3^\sigma)=0,\]
that is  a degenerate $C_F^m$-set
with vertices $R(1,0,0)$ and $L(0,0,1)$ since the collineation $\Phi$ maps the line $RL$ into itself (see \cite{DonDur14}).
\\
Next assume $t_L$ is not the line $RL$, so we may suppose, w.l.o.g., that $t_L:x_1=0$, that is $d=0$ and $b=1$.
In this case 
\[ \Gamma:(ax_1+cx_2)x_2^\sigma +x_1x_3^\sigma=0,\] with $c\ne 0$,
is a non-degenerate $C_F^m$-set with vertices $R(1,0,0)$ and $L(0,0,1)$ since the collineation $\Phi$ does not map the line $RL$ into itself (see \cite{DonDur14}).
\\
Next assume that $V^\perp = V^{\tang}$ so they coincide as projective points of $\PG(2,q^n)$. We may assume that
$R=L=(1,0,0)$ is both the left and right radical of $\langle \mbox{ },\mbox{ } \rangle$.
In this case the set of absolute points is the set $\Gamma$ of points 
of $\PG(2,q^n)$ such that $X_t A X^\sigma =0$ with
\\
$$A=\left(
\begin{array}{ccc}
0 & 0 & 0 \\ 0 & a & b \\ 0 & c & d \end{array} \right) $$
\\
so 
\[\Gamma: (ax_2+bx_3)x_2^\sigma+(cx_2+dx_3)x_3^\sigma=0.\]
It follows that $\Gamma$
is the set of points of a cone with vertex the point $R$,  the base of this cone is either the empty set, a point, two points or $q+1$ points of an $\fq$-subline of a line not through the point $R$.
\\
Note that $\Gamma$ can be seen as the set of points of intersection of corresponding lines under the same collineation $\Phi$  from ${\cal P}_R$ to ${\cal P}_L$, with $R=L$, similar to the case $R\ne L$.\\
Next assume $rank(A)=1$. In this case dim $V^\perp =$ dim $V^{\tang} =2$, so in $\PG(2,q^n)$ the rigth- and left-radical are given by two lines $r$ and $\ell$.
First assume $r\ne \ell$, so we may put $r:x_3=0$ and $\ell:x_1=0$, then $\Gamma:x_1x_3^\sigma=0$, that is the union of the two lines $r$ and $\ell$.
\\
Finally assume that  $r=\ell$, e.g. $r=\ell:x_3=0$, then 
$\Gamma:(cx_1+dx_2+ex_3){x_3}^\sigma=0$ 
is again the union of two (possibly coincident) lines.

\noindent In the next proposition we summarize what has been proved  with the previous arguments.

\begin{proposition}
	The set of absolute points in $\PG(2,q^n)$ of a degenerate correlation induced by a degenerate $\sigma$-sesquilinear form of $\fqn^3$ is one of the following:
	\begin{itemize}
		\item a cone with vertex a point $R$ and base  a $\sigma$-quadric of a line $\ell$ not through $R$
		(i.e. just $R$, a line, two lines or an $\fq$-subpencil of ${\cal P}_R$);
		\item a degenerate $C_F^m$-set $($see \cite{DonDur14} $)$;
		\item a  $C_F^m$-set $($see \cite{DonDur14}$).$ 
	\end{itemize}
\end{proposition}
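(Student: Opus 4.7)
The plan is to perform a case analysis on $r=\mathrm{rank}(A)$, which by degeneracy of the form lies in $\{1,2\}$, and within the rank-$2$ case on whether the projective points $R:=V^\perp$ and $L:=V^{\tang}$ of $\PG(2,q^n)$ coincide. In each case I will choose a coordinate frame adapted to the radicals so that $A$ takes a normal form, and then either recognize $\Gamma$ as a cone or as the locus of intersections of corresponding lines under a collineation of two pencils, i.e.\ a $C_F^m$-set in the sense recalled in the previous section.

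Suppose first $\mathrm{rank}(A)=2$ with $R\neq L$. I place $R=(1,0,0)$ and $L=(0,0,1)$; since $R\in V^\perp$ the first column of $A$ vanishes, and since $L\in V^{\tang}$ the last row of $A$ vanishes. The remaining $2\times 2$ block $A'$ is invertible because $\mathrm{rank}(A)=2$. Expanding $X_t A X^\sigma$ gives an equation that is linear in $x_3^\sigma$ with coefficients linear in $(x_1,x_2)$, and this lets me identify $\Gamma$ as the locus of intersections of corresponding lines under the map $\Phi\colon\cP_R\to\cP_L$ sending $\ell_{\alpha,\beta}\mapsto\ell'_{\alpha',\beta'}$ with $(\alpha',\beta')_t=A'(\alpha,\beta)_t^\sigma$; well-definedness and the collineation property follow from invertibility of $A'$. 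Hence $\Gamma$ is a (possibly degenerate) $C_F^m$-set with vertices $R,L$. To decide which, I compute the tangent $t_L$ to $\Gamma$ at $L$, which reads $bx_1+dx_2=0$; the criterion $\Phi(RL)=RL$ translates into $t_L=RL$, i.e.\ $b=0$ after a normalization, yielding the degenerate $C_F^m$-set, while the complementary case yields the non-degenerate one.

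If instead $\mathrm{rank}(A)=2$ with $R=L$, I set $R=L=(1,0,0)$, which forces the first row and the first column of $A$ to vanish. The equation of $\Gamma$ is then independent of $x_1$, so $\Gamma$ is a cone with vertex $R$ whose base, obtained by intersecting with any line $\ell$ not through $R$ (say $x_1=0$), is a $\sigma$-quadric of $\PG(1,q^n)$. The classification lemma for $\sigma$-quadrics of $\PG(1,q^n)$ proved earlier then yields exactly the four possibilities (empty, one point, two points, or an $\fq$-subline), matching the first bullet of the statement.

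Finally, if $\mathrm{rank}(A)=1$ then $\dim V^\perp=\dim V^{\tang}=2$, so the right- and left-radicals give two lines $r,\ell$ of $\PG(2,q^n)$. Choosing coordinates so that $r\colon x_3=0$ and $\ell\colon x_1=0$ when $r\neq \ell$, or $r=\ell\colon x_3=0$ when they coincide, forces $A$ to have nonzero entries in very restricted positions, and direct factorization of $X_t A X^\sigma$ yields $\Gamma\colon x_1 x_3^\sigma=0$ or $\Gamma\colon(cx_1+dx_2+ex_3)x_3^\sigma=0$; in each case $\Gamma$ is the union of two (possibly coincident) lines, hence a cone with vertex any common point of these lines. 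I expect the main obstacle to be, in the rank-$2$ case with $R\neq L$, the verification that the abstract condition $\Phi(RL)=RL$ in the definition of a (degenerate) $C_F^m$-set matches the concrete coefficient condition on $A$ detected by the tangent at $L$, since this is what pins the configuration onto one of the two $C_F^m$-set bullets and requires cross-referencing the canonical equations recalled in the preceding theorems.
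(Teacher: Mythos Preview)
Your proposal is correct and follows essentially the same approach as the paper's proof: the same case split on $\mathrm{rank}(A)$ and on whether the one-dimensional radicals coincide, the same normal forms for $A$, the same identification of $\Gamma$ with the locus of intersections under a collineation $\Phi\colon\cP_R\to\cP_L$ in the $R\neq L$ case, and the same tangent-at-$L$ criterion to separate the degenerate from the non-degenerate $C_F^m$-set. One cosmetic slip: with the paper's conventions $R\in V^\perp$ forces the first \emph{row} (not column) of $A$ to vanish and $L\in V^{\tang}$ the last \emph{column} (not row), so your subsequent formulas (which match the paper's matrix shape with first column and last row zero) are really using $R=V^{\tang}$ and $L=V^\perp$, exactly as the paper does.
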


\subsection{Some applications for $C_F^m$-sets of $\PG(2,q^n)$}
In recent years,  (degenerate or not) $C_F^m$-sets of $\PG(2,q^n)$ have been used for several applications.
We recall some of them here.  \\
Let $M_{m,n}(\fq)$, with $m\le n$, be the vector space of all the $m\times n$
matrices with entries in $\fq$. The {\em distance} between
two matrices is the rank of their difference. An $(m \times n,q,s)$-{\em rank
	distance code} is a subset ${\cal X}$ of $M_{m,n}(\fq)$ such that the minimum
distance between any two of its distinct elements is $s$. An  {\em $\fq$-linear}
$(m\times n,q,s)$-rank distance code is a subspace of
$M_{m,n}(\fq)$. 

\noindent It is known (see e.g. \cite{Del}) that the size of an $(m\times n,q,s)$-rank distance code
$\cX$ satisfies the {\em Singleton-like bound}: 
\[
|\cX| \le q^{n(m-s+1)}.
\]
When this bound is achieved, $\cX$ is called an $(m\times n, q, s)${\em -maximum
	rank distance code}, or $(m\times n, q, s)$-{\em MRD code}, for short.

\noindent In finite geometry $(m\times m,q,m)$-MRD codes are known as {\em spread sets} (see
e.g. \cite{Dem}) and there are examples for both $\fq$-linear and non-linear codes. \\
The first class of non-linear MRD codes, different from spread sets, are the 
non-linear $(3\times 3,q,2)$-MRD codes constructed 
in \cite{CosMarPav}  by using $C_F^1$-sets of $\PG(2,q^3)$.

\noindent
Indeed starting with a $C_F^1$-set ${\cal C}$ of $\PG(2,q^3)$ in \cite{CosMarPav}, they construct a set ${\cal E}$ of 
$q^3+1$ points that is an {\em exterior} set  to the component ${\cal C}_1$ of ${\cal C}$ (i.e. the lines joining any two distinct points of ${\cal E}$ is external to ${\cal C}_1$).
\\
By using field reduction from $\PG(2,q^3)$ to $\PG(8,q) = \PG(M_{3,3}(q))$, the component 
${\cal C}_1\cong \PG(2,q)$ corresponds to the Segre variety ${\cal S}_{2,2}$. This is the set of matrices with rank $1$.   The set ${\cal E}$ corresponds to
an exterior set to  the Segre variety ${\cal S}_{2,2}$ of $\PG(8,q)$ and hence a set of $q^6$ matrices such that  the difference 
between any two has rank at least two, i.e. a $(3\times 3,q,2)$-MRD code.
\\
In this section, it is shown that, starting from a $C_F^m$-set of $\PG(2,q^n)$,   infinite families of
non-linear $(3\times n,q,2)$-MRD codes can be constructed. (See also \cite{Dur}).
\\
Let $R=(1,0,0)$ and $ L=(0,0,1)$ be two points of $\PG(2,q^n)$
with $n \ge 3$ and let ${\cal C}$ be the $C_F^m$-set with vertices $R$ and $L$ given by 
\[ 
{\cal C}=
\{P_t=(t^{q^m+1},t,1):
t\in \fqn\} \cup \{R\}.
\]
It follows that 

$$ {\cal C} = {\displaystyle{\bigcup}_{a\in\Fq^*}\; {\cal C}_a \cup \{R,L\}}, $$ with 
${\cal C}_a=\{P_t: t\in N_a\}$ and   $N_a=\{x \in \fqn: N(x)=a\}$,
where $N(x)=x^{\frac{q^n-1}{q-1}}$ denotes the {\em norm} of an element $x\in \fqn$ w.r.t. $\fq$.
For every $a\in\fq^*$, consider the partition of the points  of the line $RL$, different from $R$ and $L$, into subsets
$J_a=\{( -t,0,1):t\in N_a\}$ and let $\pi'_1\cong \PG(2,q)$ be a
subgeometry contained in  ${\cal C}_1$.

\begin{theorem}
	For every subset $T$ of $\Fq^*$ containing $1$, the set 
	$$
	\cX=({\cal C}\setminus {\textstyle\bigcup}_{a\in T} {\cal C}_a) \cup 
	{\textstyle\bigcup}_{a\in T} J_a
	$$ is an exterior set with respect to 
	$\pi_1'$.
\end{theorem}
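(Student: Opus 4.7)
The approach is a case analysis on two distinct points $X,Y\in\cX$, split according to whether each lies on the line $\ell:=RL$ or is a component point $P_t$ with $N(t)\in\Fq^*\setminus T$. Throughout I will use that every point of $\pi_1'\subseteq{\cal C}_1$ has the form $P_u=(u^{q^m+1},u,1)$ with $N(u)=1$, together with the identity $y^{q^m-1}=1$ for every $y\in\fq^*$ (which holds because $q-1$ divides $q^m-1$).

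If both $X,Y$ lie on $\ell$, then line $XY=\ell$ has equation $x_2=0$, which misses every point of ${\cal C}_1$ since those all have nonzero second coordinate. In the mixed case where $Y=P_t$ with $N(t)\notin T$ and $X\in\{R,L\}$, a direct computation shows that the only candidate $P_u$ on line $RP_t$ (respectively $LP_t$) satisfies $u=t$, which fails $N(u)=1$ since $N(t)\ne 1$. The remaining sub-cases are: (i) two component points $P_t,P_s$ with $N(t),N(s)\notin T$; and (ii) a component point $P_t$ with $N(t)\notin T$ together with a point $(-s,0,1)\in J_a$, where $a=N(s)\in T$.

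In case (i), requiring $P_u=(u^{q^m+1},u,1)$ to lie on the line $P_tP_s$ and eliminating the pencil parameter yields the relation $u(u-s)^{q^m-1}=t(t-s)^{q^m-1}$ for $u\ne s$. Applying $N$ and using that $N(u-s),N(t-s)\in\fq^*$ collapses this to $N(u)=N(t)$; since $N(u)=1\in T$ but $N(t)\notin T$, this is impossible, and $u=s$ gives the excluded point $P_s\notin{\cal C}_1$. In case (ii), a similar parametrization of the line through $P_t$ and $(-s,0,1)$, simplified via $(\lambda-1)^{q^m}=\lambda^{q^m}-1$ in characteristic $p$, produces $u(u-t)^{q^m-1}=s/t$; taking norms gives $N(u)=N(s)/N(t)$, so $N(u)=1$ forces $N(s)=N(t)$, contradicting $N(s)\in T$ and $N(t)\notin T$.

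The main technical obstacle is extracting the clean relations in cases (i) and (ii) from the raw line parametrizations; each requires a careful elimination using the Frobenius identity $(x-y)^{q^m}=x^{q^m}-y^{q^m}$, after which the norm identity on $\fq^*$ dispatches every non-trivial case uniformly. I would close by noting that the same argument in fact proves the stronger statement that $\cX$ is exterior to the whole component ${\cal C}_1$, which specializes to the stated conclusion for the subgeometry $\pi_1'$.
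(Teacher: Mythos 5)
Your proposal is correct and follows essentially the same route as the paper: a case analysis over pairs of points of $\cX$, where the key step is to turn collinearity with a point $P_u$ of ${\cal C}_1$ into a relation of the form $u\cdot(\text{nonzero element})^{q^m-1}=(\text{element of prescribed norm})$ and then apply the norm $N$, using $N(y)^{q^m-1}=1$, to force $N(u)$ into $T$, a contradiction (your identities $u(u-s)^{q^m-1}=t(t-s)^{q^m-1}$ and $u(u-t)^{q^m-1}=s/t$ are exactly equivalent to the paper's determinant conditions, e.g.\ $t=\alpha M_1^{q^m-1}$). The only difference is presentational: you write out explicitly the two-secant case through two component points and the trivial case of two points on $RL$, which the paper dispatches with ``similarly'' and by omission, and you note, as the paper's own computation implicitly does, that the argument gives exteriority with respect to all of ${\cal C}_1$.
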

\begin{proof} The lines $RP$ and $LP$, with 
$P\in {\cal C}\setminus ({\textstyle\bigcup}_{a\in T} {\cal C}_a \cup \{R,L\})$, meeting ${\cal C}$ exactly in two points,
are external lines w.r.t.  $\pi'_1$. 
\\
Similarly, for every $P\in {\cal C}_b$ and $P'\in {\cal C}_{b'}$ with 
$b,b'\in \Fq^*\setminus T$ the line $PP'$ is external to  $\pi'_1$. 
\\
Finally, for every  point $P\in {\cal C}_b$  with $b\in \Fq^*\setminus T$ and $P'\in J_{a}$ with $a\in  T$,
the line $PP'$ is external to  $\pi'_1$. 
\\
Indeed suppose, by way of contradiction, that 
the line $PP'$ meets $\pi'_1$ in a point $S$ with coordinates 
$(x^{q^{2m}},x^{q^m},x)$. Let
$P=(\alpha^{q^m+1},\alpha,1)$ with
$N(\alpha)=b$ and let $P'=(-t,0,1)$ with $N(t)=a$. By
calculating the determinant of the matrix $M$ whose rows are the coordinates
of the points $S,P,P'$, we have that $|M|=-t M_1 +
\alpha M_1^{q^m}$, where $M_1$ is the cofactor of the element $m_{3,1}$ of $M$.
Since $S, P$ are distinct points, $M_1\ne 0$,
hence $|M|=0$ if and only if $t=\alpha M_1^{q^m-1}$, that is a contradiction since
$N(\alpha)=b$ while $N(t)=a$ with $a\ne b$. \end{proof}  
\noindent From the previous theorem we have the following result.
\begin{corollary}
	For all $n \ge 3, q>2$, the vectors $\rho v\in
	M_{3,n}(q)$, $\rho \in \Fq^*$, whose corresponding points are in $\cX$, plus
	the zero vector, give a non-linear $(3\times n,q,2)$-MRD code.
\end{corollary}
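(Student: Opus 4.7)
The plan is to interpret the geometric exterior-set property from the preceding theorem as the MRD rank-distance condition via field reduction. I fix the identification $M_{3,n}(q)\cong\fq^{3n}\cong\fqn^3$: each $P\in\PG(2,q^n)$ becomes a Desarguesian spread element $\pi(P)$, an $n$-dimensional $\fq$-subspace, with distinct $\pi(P),\pi(P')$ meeting only at $0$. Since $\fqn$-scaling on $\fqn^3$ is $\fq$-linear of rank $n$, all non-zero matrices in $\pi(P)$ share a common $\fq$-rank $r(P)$, and the Segre-variety identification recalled in the text gives $r(P)=1$ iff $P\in\pi'_1$. The set described in the corollary is therefore
\[\cX'\;:=\;\{0\}\;\cup\;\bigcup_{P\in\cX}\bigl(\pi(P)\setminus\{0\}\bigr).\]

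For the size, $|\cC_a|=|J_a|=(q^n-1)/(q-1)$ yields $|\cX|=|\cC|=q^n+1$, and spread elements being pairwise disjoint off $0$ gives $|\cX'|=1+(q^n+1)(q^n-1)=q^{2n}$; this matches the Singleton-like bound $q^{n(3-2+1)}$ for $(3\times n,q,2)$-rank distance codes. For the distance, let $c_1,c_2\in\cX'$ be distinct. Writing $c_i\in\pi(P_i)$ with $P_i\in\cX$ (or $c_i=0$), the difference $c_1-c_2$ lies in $\pi(P_1)+\pi(P_2)$, which (when $P_1\ne P_2$ are both in $\cX$) splits into the spread elements $\pi(Q)$ as $Q$ runs over the line $P_1P_2$ in $\PG(2,q^n)$; so $c_1-c_2\in\pi(Q)$ for a unique $Q$, and $\mathrm{rank}(c_1-c_2)=r(Q)$. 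The preceding theorem ensures that both $\cX$ and every line through two of its points avoid $\pi'_1$, so $Q\notin\pi'_1$ and $r(Q)\ge 2$; the degenerate cases $P_1=P_2$ and $c_1=0$ reduce directly to this same statement on a single $\pi(P)\setminus\{0\}$ with $P\in\cX\setminus\pi'_1$. Together with the size count, $\cX'$ is a $(3\times n,q,2)$-MRD code.

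For non-linearity, suppose $\cX'$ were an $\fq$-subspace. Then for any distinct $P,P'\in\cX$ and any $v\in\pi(P)\setminus\{0\}$, $w\in\pi(P')\setminus\{0\}$ we would have $v+w\in\cX'$; since such sums realize every point of $PP'\setminus\{P,P'\}$, the whole line $PP'$ would lie in $\cX$, and from $|\cX|=q^n+1=|PP'|$ it would follow that $\cX=PP'$. For $q>2$ one may however choose $T\subsetneq\fq^*$ containing $1$ (for instance $T=\{1\}$); then $\cX$ contains both the affine points of $J_1\subset RL$ and points $P_t\in\cC\setminus\cC_1$ off $RL$, so it is not a single line and $\cX'$ is non-linear. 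The subtlest step of this plan is the extension beyond $n=3$ of the identification of the rank-one locus of $M_{3,n}(q)$ with the union of the spread elements of $\pi'_1$; this relies on the pseudoregulus/scattered-$\fq$-linear-set description of $\cC_1$ recalled in Section~3, and is where most of the geometric content sits.
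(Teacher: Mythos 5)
Your proposal follows essentially the same route as the paper: the corollary is deduced from the preceding exterior-set theorem via field reduction, the code being $\{0\}$ together with the union of the spread elements over $\cX$, with the size $q^{2n}$ meeting the Singleton-like bound and the exterior property forcing every nonzero difference to lie in a spread element $\pi(Q)$ with $Q\notin\pi'_1$, hence of rank at least $2$; your explicit size count and your non-linearity argument (closure under addition would force $\cX$ to be a line, impossible for $q>2$) simply fill in details the paper leaves implicit. The one step you flag as unproved --- that for $n>3$ the identification $M_{3,n}(q)\cong\fqn^3$ can be chosen so that the rank-one locus is exactly the union of the spread elements over the subgeometry $\pi'_1\subseteq{\cal C}_1$ --- is likewise not proved in the paper, which only recalls the $n=3$ Segre-variety correspondence and defers the general case to \cite{CosMarPav,Dur,DurSic}, so your treatment is on par with the paper's own.
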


\begin{remark}
	These codes have been generalized first,  by using bilinear forms and 
	the cyclic representation of $\PG(n-1,q^n)$ by N. Durante and A. Siciliano in  \cite{DurSic}, to non-linear
	$(n\times n, q, n-1)$-MRD codes. Later G. Donati and N. Durante generalized these codes in \cite{DonDur18}  by using $\sigma$-normal rational curves, which is a generalization of normal rational curves 
	in $\PG(d,q^n)$, to non-linear {$((d+1)\times n, q, d)$-MRD codes}, where $d\le n-1$.
\end{remark}

\noindent Other applications of (degenerate or not) $C_F^m$-sets of $\PG(2,q^n)$ are:
\begin{itemize}
	\item{\em  Conics of $\PG(2,q^n), q$ even.} \\ Indeed the affine part, plus the vertices, of a degenerate $C_F^1$-set of 
	$\PG(2,q^n), q=2$ form a regular hyperoval.
	\item  {\em Translation hyperovals of $\PG(2,q^n)$, $q$ is even.} \\ Indeed the affine part, plus the vertices of a $C_F^m$-set
	of $\PG(2,q^n), q=2$, $m>1$, form a translation hyperoval.
	\item {\em Semifield flocks of a quadratic cone of $\PG(3,q^n)$, $q$ odd
		(and hence also translation ovoids of the parabolic quadric $Q(4,q^n)$, $q$ odd). }\\
	Indeed, every semifield flock ${\cal F}$ of a quadratic cone of $\PG(3,q^n), q$ odd is associated with a scattered
	$\fq$-linear set ${\cal I}({\cal F})$  of internal points w.r.t. a non-degenerate conic  ${\cal C}$ of a plane $\PG(2,q^n)$ of $\PG(3,q^n)$.
	The known examples of semifield flocks of a quadratic cone of $\PG(3,q^n)$ are the linear flock, the Kantor-Knuth,
	the Cohen-Ganley $(q=3)$ and the sporadic semifield flock $(q=3,n=5)$. These semifield flocks give as set ${\cal I}({\cal F})$
	a point, a scattered $\fq$-linear set of pseudoregulus type on a secant line to ${\cal C}$, a component of a $C_F^1$ set of $\PG(2,3^n)$
	and a component of a $C_F^2$-set of $\PG(2,3^5)$, respectively.
\end{itemize}

\noindent See \cite{Dur} for more details on these applications.

\section{Non-degenerate $\sigma$-sesquilinear forms in $\fqn^3$}

\noindent In this section we will determine the possible structure for the set of  absolute points  in 
$\PG(2,q^n)$ of a correlation induced by a non-degenerate $\sigma$-sesquilinear from of $\fqn^3$. So consider the set of points $\Gamma:X_t A X^\sigma=0$.
These sets of points have been studied in several papers by B. Kestenband. 
\noindent  For this reason the set of points of this section have been called {\em Kestenband $\sigma$-conics} in 
\cite{Dur}. We will try to summarize what is know up to now.
First recall some easy properties for the set $\Gamma$.
For the sequel we can assume $\sigma \ne 1$, since if $\sigma =1$, then $\Gamma$ is always a (possibly degenerate)
conic.
\begin{proposition}
	Every line of $\PG(2,q^n)$ meets $\Gamma:X_t A X^\sigma=0$ in either $0$, $1$,  $2$ or $q+1$ points of an $\fq$-subline.
\end{proposition}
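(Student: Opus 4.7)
The plan is to mirror the argument of Proposition \ref{pr_line} by reducing the intersection to a binary $\sigma$-quadric on the line, and then to use non-degeneracy of $A$ only at the very end, in order to exclude the extra possibility ``the line is contained in $\Gamma$''.

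First I would pick two distinct points $Y,Z$ spanning a generic line $\ell$ of $\PG(2,q^n)$ and parametrize it as $X=\lambda Y+\mu Z$ with $(\lambda,\mu)\in \fqn^2\setminus\{(0,0)\}$. Substituting into $X_t A X^\sigma=0$ produces a binary $\sigma$-sesquilinear equation
\[
a\lambda^{\sigma+1}+b\lambda\mu^\sigma+c\lambda^\sigma\mu+d\mu^{\sigma+1}=0,
\]
with $a=Y_tAY^\sigma$, $b=Y_tAZ^\sigma$, $c=Z_tAY^\sigma$, $d=Z_tAZ^\sigma$, exactly as in the proof of Proposition \ref{pr_line}. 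The classification Lemma for absolute points of a $\sigma$-sesquilinear form on $\fqn^2$ then tells us that, as long as $(a,b,c,d)\ne(0,0,0,0)$, the set $\Gamma\cap\ell$ has cardinality $0$, $1$, $2$, or $q+1$ (the last case being a full $\fq$-subline).

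The main obstacle, and the only place where the hypothesis ``$A$ non-degenerate'' is genuinely needed, is ruling out the case $a=b=c=d=0$. I would argue by a dimension count on the two-dimensional subspace $W=\langle Y,Z\rangle\subseteq V$ corresponding to $\ell$. The simultaneous vanishing of the four coefficients says precisely that $\langle w,w'\rangle=0$ for every $w,w'\in W$, i.e.\ $W\subseteq W^\perp$. On the other hand, since $\mbox{rk}(A)=3$ the map $y\mapsto \langle\cdot,y\rangle$ is a $\sigma$-semilinear bijection $V\to V^*$; choosing a basis of $W$ and intersecting the kernels of the two resulting linearly independent functionals shows that $\dim W^\perp=3-\dim W=1$, which contradicts $\dim W=2\le \dim W^\perp$.

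Hence the binary form attached to $\ell$ is never identically zero, the ``line-in-$\Gamma$'' case from Proposition \ref{pr_line} cannot occur, and the list of possible intersection sizes collapses to $\{0,1,2,q+1\}$ as claimed. In summary, the only genuinely new ingredient beyond Proposition \ref{pr_line} is the elementary linear-algebra fact that a non-degenerate $\sigma$-sesquilinear form on a $3$-dimensional space admits no totally isotropic plane.
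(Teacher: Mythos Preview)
Your proof is correct and follows the same plan as the paper's: invoke Proposition~\ref{pr_line} and then exclude the possibility that a line lies entirely in $\Gamma$. The only difference is in how that exclusion is carried out. The paper chooses coordinates so that the alleged line is $x_3=0$ and reads off directly from $X_tAX^\sigma=0$ (for all $X$ with $x_3=0$) that the entries $a_{11},a_{12},a_{21},a_{22}$ of $A$ must vanish, whence $\mbox{rk}(A)\le 2$. You instead argue in a coordinate-free way: the vanishing of the induced binary form means the underlying $2$-space $W$ satisfies $W\subseteq W^\perp$, while non-degeneracy of $A$ gives $\dim W^\perp=3-\dim W=1$, a contradiction. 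Your route is the standard Witt-index type dimension count and is slightly more conceptual; the paper's is more concrete and needs nothing beyond the explicit matrix shape. Both arrive at exactly the same contradiction.
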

\begin{proof}  From Proposition \ref{pr_line}  we only have to show that $\Gamma$ does not contain lines.
Suppose, by way of contradiction, that  $\Gamma$ contains a line, say $\ell$, then we can assume w.l.o.g. that $\ell:x_3=0$.
Then the matrix $A$ assumes the following shape:
\\
$$A=\left(
\begin{array}{ccc}
0 & 0 & a \\ 0 & 0 & b \\ c & d & e \end{array} \right) $$
\\
hence $rank(A)\le 2$,  a contradiction.
\end{proof}
\noindent In the next two theorems let $\sigma:x\mapsto x^{q^m}$ with $(m,2n+1)=1$.

\begin{theorem}
	Let $\Gamma:X_t A X^\sigma=0$ be the set of absolute points of a correlation of $\PG(2,q^{2n+1})$,  $n>0$,
	$q$ odd. Then we find that  $|\Gamma|\in \{q^{2n+1}+\epsilon q^{n+1}+1 | \epsilon\in \{0,1,-1\}\}$ and the set $\Gamma$ depends on the number of points of $\Gamma$ and points outside $\Gamma$ fixed by the induced collineation $f_{A_t^{-1}A^\sigma,\sigma^2}$.
	The following holds:
	\begin{itemize}
		\item If $f_{A_t^{-1}A^\sigma,\sigma^2}$ fixes no points outside $\Gamma$, then $|\Gamma|=q^{2n+1}+1$. Moreover
		$f_{A_t^{-1}A^\sigma,\sigma^2}$ fixes one point of $\Gamma$ and $\Gamma$ is projectively equivalent to:
		\[x_3x_1^\sigma+x_2^{\sigma+1}+(x_1+ex_2)x_3^\sigma=0,\] where $e^{q^{2n}}+e^{q^{2n-1}}+\ldots+e^q+e\ne 0.$
		\item If $f_{A_t^{-1}A^\sigma,\sigma^2}$ fixes more than one point outside  $\Gamma$, then $|\Gamma|=q^{2n+1}+1$.  Moreover, $\Gamma$ is projectively equivalent to: 
		\[ x_1^{\sigma+1}+x_2^{\sigma+1}+x_3^{\sigma+1}=0,\]
		the collineation $f_{A_t^{-1}A^\sigma,\sigma^2}$ fixes $\PG(2,q)$ pointwise and hence it fixes the $q+1$ points of $\Gamma$ on the subconic 
		$x_1^2+x_2^2+x_3^2=0$ in $\PG(2,q)$ and $q^2$ points of $\PG(2,q)\setminus \Gamma$.
		\item If $f_{A_t^{-1}A^\sigma,\sigma^2}$ fixes one point outside  $\Gamma$, then $\Gamma$ is projectively equivalent to: 
		\[ax_1^{\sigma+1}+(bx_1+cx_2)x_2^\sigma+dx_3^{\sigma+1}=0,\] with $a,b,c,d\ne 0$. 
		Moreover $f_{A_t^{-1}A^\sigma,\sigma^2}$ fixes either
		$0$, $1$, $2$ or $q+1$ points of $\Gamma$. In particular
		\begin{itemize}
			\item If $f_{A_t^{-1}A^\sigma,\sigma^2}$ fixes either $0$, $2$ or $q+1$ points of $\Gamma$, then $\epsilon=0.$ In the last case the $q+1$ points of $\Gamma$ fixed by $f_{A_t^{-1},A^\sigma,\sigma^2}$ are collinear.
			\item If $f_{A_t^{-1}A^\sigma,\sigma^2}$ fixes $1$ point of $\Gamma$, then $\epsilon \in \{-1,1\}.$
		\end{itemize}
	\end{itemize}
\end{theorem}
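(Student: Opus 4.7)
The plan hinges on the observation that the collineation $g:=f_{A_t^{-1}A^\sigma,\sigma^2}$ stabilises $\Gamma$ setwise: iterating the correlation $\pi:\ Y\mapsto X_tAY^\sigma=0$ yields $g$, and any absolute point of $\pi$ must be absolute for $\pi\circ\pi$. Because $(m,2n+1)=1$ and $2n+1$ is odd, the companion automorphism $\sigma^2$ also has order $2n+1$, so $g^{2n+1}$ is linear. Hence the structure of $\Gamma$ is governed by the $\sigma^2$-semilinear projective conjugacy class of $M:=A_t^{-1}A^\sigma$, while $\Gamma$ itself transforms covariantly when $A$ is replaced by $P_tAP^\sigma$. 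The theorem is thus a case analysis of $(A,g)$ up to this joint equivalence, indexed by the number of fixed points of $g$ outside $\Gamma$.

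First I would split into three branches according to this count. If $g$ has no external fixed point, a semilinear canonical form argument forces $M$ to be cyclic, and pulling back through $M=A_t^{-1}A^\sigma$ produces the companion-shape form $x_3x_1^\sigma+x_2^{\sigma+1}+(x_1+ex_2)x_3^\sigma=0$; the hypothesis $e+e^q+\cdots+e^{q^{2n}}\neq 0$ records $\mathrm{Tr}_{\fqn/\fq}(e)\neq 0$, which is exactly the obstruction to lifting an additional external invariant flag. If $g$ has more than one external fixed point, an eigenspace count forces $M$ to be scalar, $A$ normalises to the identity, and $\Gamma$ becomes the $\sigma$-Hermitian curve $x_1^{\sigma+1}+x_2^{\sigma+1}+x_3^{\sigma+1}=0$; the fixed locus of $g$ is then the entire $\fq$-subplane $\PG(2,q)$, meeting $\Gamma$ in the subconic $x_1^2+x_2^2+x_3^2=0$. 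The intermediate case of a unique external fixed point decomposes $V$ as a sum of $g$-invariant subspaces of dimensions $1$ and $2$ and reduces $A$ to the block form giving $ax_1^{\sigma+1}+(bx_1+cx_2)x_2^\sigma+dx_3^{\sigma+1}=0$.

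Counting $|\Gamma|$ then proceeds by decomposing $\Gamma$ into $g$-orbits of length dividing $2n+1$ and applying Proposition~\ref{pr_line} on pencils through the fixed points. In the Hermitian branch a direct character-sum analysis yields $|\Gamma|=q^{2n+1}+1$, i.e.\ $\epsilon=0$. In the cyclic branch the absence of an external fixed flag likewise forces $\epsilon=0$. In the block-diagonal branch, the three values $\epsilon\in\{-1,0,+1\}$ appear as the Hasse--Weil sign choices for the affine curve obtained by sectioning $\Gamma$ away from the isolated fixed point; the correspondence between $\epsilon$ and the number of fixed points of $g$ lying on $\Gamma$ is read off an explicit norm equation in the entries $a,b,c,d$. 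When $g$ fixes $q+1$ points of $\Gamma$, these are collinear because they realise an $\fq$-subline on a $g$-invariant line, forcing $\epsilon=0$ there as well.

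The main obstacle, in my view, is the nonstandard canonical form theory for the equivalence $A\sim P_tAP^\sigma$: this is not the usual similarity $M\sim PMP^{-1}$ because of the transpose, so one cannot merely invoke rational canonical form for $M$ and must instead match the $\sigma^2$-semilinear Jordan-type invariants of $M$ against those realisable by the sesquilinear equivalence, subject to the constraint that $A$ be nondegenerate. A secondary delicate point is distinguishing, within the block-diagonal case, the subcases forcing $\epsilon=0$ from those allowing $\epsilon=\pm 1$; this amounts to explicit norm/trace bookkeeping on the entries and their $\sigma$-conjugates, and it is this arithmetic step rather than any deep geometric input that ultimately drives the statement.
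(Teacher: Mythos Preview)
The paper does not actually prove this theorem. Section~5 is explicitly a survey: the authors write that these sets ``have been studied in several papers by B.~Kestenband'' and that ``we will try to summarize what is known up to now'', and after stating the theorems they close with ``for the results and the examples of this section we refer to the following papers by B.C.~Kestenband \cite{Kes90,Kes00,Kes03,Kes05_1,Kes05_2,Kes05_3,Kes06,Kes07,Kes09,Kes10,Kes14}''. So there is no in-paper proof to compare against; the statement is imported wholesale from Kestenband's classification.

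Judged on its own merits, your plan is a reasonable outline and is in the right spirit: Kestenband's papers do proceed by analysing the companion collineation $g=f_{A_t^{-1}A^\sigma,\sigma^2}$, reducing $A$ to canonical shapes under the congruence $A\mapsto P_tAP^\sigma$, and then counting. You have correctly identified that the hard step is the canonical-form theory for this twisted congruence rather than ordinary similarity, and that the $\epsilon$-trichotomy in the block-diagonal case comes down to explicit norm/trace conditions on the entries. What you have written, however, is a sketch rather than a proof: assertions such as ``the absence of an external fixed flag likewise forces $\epsilon=0$'', the invocation of ``Hasse--Weil sign choices'', and the claim that a ``direct character-sum analysis'' handles the Hermitian branch are all placeholders for substantial computations that in Kestenband's treatment occupy many pages. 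If you intend to supply an independent proof you will need to carry out the semilinear canonical-form reduction in full (not merely name it) and perform the actual point counts; as written the proposal correctly locates the architecture but does not execute any of the steps.
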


\begin{theorem}
	Let $\Gamma:X_t A X^\sigma=0$ be the set of absolute points of a correlation of $\PG(2,q^{2n+1})$,  $n>0$,
	$q$ even. Then we find that  $|\Gamma|\in \{q^{2n+1}+\epsilon q^{n+1}+1 | \epsilon\in \{0,1,-1\}\}$ and the set $\Gamma$ depends on the number of points of $\Gamma$ and points outside $\Gamma$ fixed by the induced collineation $f_{A_t^{-1}A^\sigma,\sigma^2}$.
	The following holds:
	\begin{itemize}
		\item If $f_{A_t^{-1}A^\sigma,\sigma^2}$ fixes no points outside $\Gamma$, then $\epsilon\in \{-1,+1\}$. Moreover
		$f_{A_t^{-1}A^\sigma,\sigma^2}$ fixes one point of $\Gamma$ and $\Gamma$ is projectively equivalent to: 
		\[x_3x_1^\sigma+x_2^{\sigma+1}+(x_1+ex_2)x_3^\sigma=0,\] where $e^{q^{2n}}+e^{q^{2n-1}}+\ldots+e^q+e\ne 0$.
		\item If $f_{A_t^{-1}A^\sigma,\sigma^2}$ fixes at least  one point outside  $\Gamma$, then $\epsilon=0$.  Moreover, 
		$\Gamma$ is projectively equivalent to: 
		\[ x_1^{\sigma+1}+\rho x_1 x_2^{\sigma}+x_2^{\sigma+1}+x_3^{\sigma+1}=0,\]
		for some $\rho \in \mathbb{F}_{q^{2n+1}}$.
		The trinomial $x^{q^m+1}+\rho x + 1$ can have one, two, $q+1$ or no zeros. 
		\begin{itemize}
			\item[1.] If it has one or $q+1$ zeros, 
			then $\rho=0$ and  the collineation $f_{A_t^{-1}A^\sigma,\sigma^2}$ fixes $\PG(2,q)$ pointwise and hence it fixes the $q+1$ points of $\Gamma$ on the subline 
			$x_1+x_2+x_3=0$ in $\PG(2,q)$ and $q^2$ points of $\AG(2,q)=\PG(2,q)\setminus \Gamma$.
			\item[2.] If it has two zeros, then the collineation  $f_{A_t^{-1}A^\sigma,\sigma^2}$ fixes $2$  points of $\Gamma$.
			\item[3.] If it  has no zeros, then the collineation  $f_{A_t^{-1}A^\sigma,\sigma^2}$ fixes no point of $\Gamma$. 
		\end{itemize}
	\end{itemize}
\end{theorem}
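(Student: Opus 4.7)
The plan is to parallel the strategy used for $q$ odd in the preceding theorem, organising the analysis around the square-of-the-correlation collineation $f:=f_{A_t^{-1}A^\sigma,\sigma^2}$. Because $(m,2n+1)=1$, the automorphism $\sigma^2$ has order $2n+1$ and fixed field $\fq$, so $f$ is a $\sigma^2$-semilinear collineation whose fixed-point configurations on $\PG(2,q^{2n+1})$ fall into a small number of standard shapes. The principal dichotomy in the theorem is controlled by whether any such fixed point lies outside $\Gamma$: a non-absolute fixed point can be used as a coordinate vertex along which the defining matrix decomposes cleanly, yielding the second family of canonical forms, whereas its absence forces a far more rigid canonical shape that must be analysed separately.

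In the first case (no fixed point of $f$ outside $\Gamma$), I would first argue that $f$ nevertheless fixes at least one point of $\Gamma$, by the standard fact that a collineation of $\PG(2,\fqnn)$ with companion automorphism of odd order must have at least one fixed point in the plane, which by hypothesis must lie on $\Gamma$. Placing this point at $L=(0,0,1)$ and the line $L^\perp$ as $x_3=0$ makes the last column of $A$ a scalar multiple of $(1,e,0)_t$; further linear changes on $(x_1,x_2)$ together with a rescaling absorb the remaining parameters and produce the canonical equation $x_3x_1^\sigma+x_2^{\sigma+1}+(x_1+ex_2)x_3^\sigma=0$. The trace condition $e^{q^{2n}}+\cdots+e^q+e\ne 0$ is exactly the condition preventing an additional fixed point of $f$ from appearing outside $\Gamma$, since the eigenvalue equation for a hypothetical second fixed point reduces to an $\fq$-linearised polynomial in $e$ whose solvability is controlled by this trace. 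The value $\epsilon\in\{-1,+1\}$ is then read off by counting $\Gamma\cap\ell$ as $\ell$ ranges over the pencil through $L$: affine intersections are the solutions of a $\sigma$-trinomial whose total count is governed by an additive character sum over $\fqnn$.

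In the second case (at least one fixed point $P\notin\Gamma$ of $f$), I would place $P=(0,0,1)$ and use $P\notin\Gamma$ to normalise the $(3,3)$-entry of $A$ to $1$. A further change of variables in $(x_1,x_2)$ eliminates the off-diagonal entries in the last row and column, yielding the canonical form $x_1^{\sigma+1}+\rho x_1x_2^\sigma+x_2^{\sigma+1}+x_3^{\sigma+1}=0$ for a unique $\rho\in\fqnn$. The cardinality $|\Gamma|=q^{2n+1}+1$, and hence $\epsilon=0$, comes from projecting from $P$ onto the line $x_3=0$ and summing the line-by-line contributions, each controlled by a norm equation in $\fqnn$. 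The fixed points of $f$ on $\Gamma$ correspond precisely to the $\fqnn$-solutions of the trinomial $x^{q^m+1}+\rho x+1=0$ obtained by imposing $f(x_1:x_2:1)=(x_1:x_2:1)$ together with absoluteness, yielding the three-way classification by number of roots. The subcase $\rho=0$ produces the Hermitian-like form $x_1^{\sigma+1}+x_2^{\sigma+1}+x_3^{\sigma+1}=0$, and the crucial observation is that on the fixed $\PG(2,q)$-subplane this simplifies to $(x_1+x_2+x_3)^2=0$ in characteristic $2$ (because $x_i\in\fq$ forces $x_i^{\sigma+1}=x_i^2$), accounting for exactly the claimed $\fq$-subline of $q+1$ absolute fixed points and the complementary $q^2$ non-absolute ones.

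The principal obstacle I foresee is the precise determination of the sign $\epsilon\in\{-1,+1\}$ in the first case. Unlike the polarity situation, there is no reflexive symmetry to exploit, so the sign must be extracted from a Weil-type additive character sum over $\fqnn$ depending nontrivially on $e$, and both signs must be shown to occur for the trichotomy to be sharp, which requires exhibiting explicit $e$ of each type. A secondary technical difficulty is that in characteristic $2$ the cross-term reductions involve $\sigma$-linearised equations rather than ordinary linear ones, and their solvability in $\fqnn$ must be verified directly using the hypothesis $(m,2n+1)=1$, since the Gram--Schmidt-style arguments available in the reflexive setting no longer apply.
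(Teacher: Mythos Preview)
The paper does not supply a proof of this theorem. The entire final section is explicitly a survey: the authors write ``We will try to summarize what is known up to now,'' and after stating the five theorems of that section (including this one) they close with ``For the results and the examples of this section we refer to the following papers by B.C. Kestenband \cite{Kes90,Kes00,Kes03,Kes05_1,Kes05_2,Kes05_3,Kes06,Kes07,Kes09,Kes10,Kes14}.'' No argument is given in the paper itself, so there is nothing against which to compare your proposal line by line.

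That said, your outline is in the spirit of Kestenband's method as described in those references: organising the case split by the fixed-point structure of the companion collineation $f_{A_t^{-1}A^\sigma,\sigma^2}$, then normalising $A$ by choosing coordinates adapted to a fixed point (absolute or not), and finally reading off $|\Gamma|$ from the resulting canonical form. Your identification of the trace condition on $e$ as the obstruction to a non-absolute fixed point, and of the trinomial $x^{q^m+1}+\rho x+1$ as the equation governing absolute fixed points in the second family, matches what one finds in Kestenband's treatment of the even-nonsquare-order case \cite{Kes05_1}. The difficulty you flag about pinning down the sign of $\epsilon$ in the first family is genuine and is indeed handled in those papers by a character-sum computation; your proposal is honest in not pretending this step is routine. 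If you want to turn the sketch into a self-contained proof you will need to consult \cite{Kes05_1} (and for the general machinery \cite{Kes03}) rather than the present paper.
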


\noindent In the next  theorems let $\sigma:x\mapsto x^{q^m}$ with $(m,2n)=1$.

\begin{theorem}
	Let $\Gamma:X_t A X^\sigma=0$, with $A$ a diagonal matrix, be the set of absolute points of a correlation of $\PG(2,q^{2n})$, $n>0$.  Then
	$$|\Gamma| \in \{ q^{2n}+1+(-q)^{n+1}(q-1) , q^{2n}+1 +(-q)^n(q-1),   q^{2n}+1-2(-q)^n\}.$$
	Moreover,
	\begin{itemize}
		\item if $|\Gamma|  = q^{2n}+1+(-q)^{n+1}(q-1)$, then $\Gamma$ is projectively equivalent to
		\[x_1^{\sigma+1}+x_2^{\sigma+1}+x_3^{\sigma+1}=0;\]
		\item if $|\Gamma|  = q^{2n}+1+(-q)^{n}(q-1)$, then $\Gamma$ is projectively equivalent to
		\[x_1^{\sigma+1}+x_2^{\sigma+1}+ a x_3^{\sigma+1}=0,\] with $a\notin \{x^{q+1}:x \in {\mathbb F}_{q^{2n}}\}; $
		\item if $|\Gamma|  = q^{2n}+1-2(-q)^{n}$, then  $\Gamma$ is projectively equivalent to
		\[ ax_1^{\sigma+1}+bx_2^{\sigma+1}+  x_3^{\sigma+1}=0,\]
		with $a,b,a/b\notin \{x^{q+1}:x \in {\mathbb F}_{q^{2n}}\} .$
	\end{itemize}
\end{theorem}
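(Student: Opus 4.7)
The plan is to normalize the diagonal form via variable rescaling, then reduce the counting of $|\Gamma|$ through the Davenport--Hasse relation to the Hermitian curve over $\F_{q^2}$.

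\textbf{Step 1 (Normalization).} Set $h=\sigma+1=q^m+1$. Since $(m,2n)=1$, an elementary gcd computation gives $\gcd(h,q^{2n}-1)=q+1$. Hence $x\mapsto x^h$ on $\F_{q^{2n}}^{*}$ has image the index-$(q+1)$ subgroup $H=(\F_{q^{2n}}^{*})^{q+1}$, with fibers of size $q+1$ over each non-zero value. Rescaling $x_i\to\mu_i x_i$ multiplies the $i$-th diagonal entry by $\mu_i^h$, and global rescaling multiplies all three coefficients by a common scalar. Therefore $(a,b,c)$ is determined by its image in $G=\F_{q^{2n}}^{*}/H$ modulo the diagonal action, where $G$ is cyclic of order $q+1$. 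Up to coordinate permutation, three configurations arise: (i) $a\equiv b\equiv c$ in $G$; (ii) exactly two of $a,b,c$ are equivalent in $G$; (iii) $a,b,c$ lie in three distinct cosets of $G$. These correspond to the three canonical equations stated.

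\textbf{Step 2 (Reduction to $m=1$ and the Hermitian case).} The number of affine solutions of $aX^h+bY^h+cZ^h=0$ in $\F_{q^{2n}}^3$ depends only on the image and fiber size of $x\mapsto x^h$; since these coincide with those of $x\mapsto x^{q+1}$, the value of $|\Gamma|$ is unchanged when we replace $\sigma$ by the Frobenius $x\mapsto x^q$. In Case~(i), the equation $X^{q+1}+Y^{q+1}+Z^{q+1}=0$ defines the Hermitian curve over $\F_{q^2}$; its zeta function $(1+qT)^{q(q-1)}/[(1-T)(1-q^2T)]$ has all Frobenius eigenvalues equal to $-q$ with multiplicity $q(q-1)=2g$. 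The standard formula then yields
\[
|\Gamma|=q^{2n}+1-q(q-1)(-q)^n=q^{2n}+1+(-q)^{n+1}(q-1),
\]
matching the first stated value.

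\textbf{Step 3 (Twisted cases via Gauss and Jacobi sums).} For Cases~(ii) and (iii), expanding via additive characters of $\F_{q^{2n}}$ one obtains
\[
|\Gamma|-(q^{2n}+1)=\frac{1}{q^{2n}}\sum_{\substack{\chi_i^{q+1}=1,\;\chi_i\neq 1 \\ \chi_1\chi_2\chi_3=1}}\bar\chi_1(a)\bar\chi_2(b)\bar\chi_3(c)\,G(\chi_1)G(\chi_2)G(\chi_3).
\]
The multiplicative characters $\chi_i$ of order dividing $q+1$ are pullbacks via the norm $N=N_{\F_{q^{2n}}/\F_{q^2}}$ of non-trivial characters $\chi_i'$ of $\F_{q^{2}}^{*}$ that are trivial on $\F_q^{*}$. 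Applying Davenport--Hasse gives $G(\chi_i)=(-1)^{n+1}G(\chi_i')^n$, and for each such $\chi_i'$ one has the uniform evaluation $G(\chi_i')=-q$. After substitution, the sum collapses to an expression depending only on the pattern of coincidences among the residues $\chi_i'(N(a)),\chi_i'(N(b)),\chi_i'(N(c))$ in $\mu_{q+1}$, producing $(-q)^n(q-1)$ in Case~(ii) and $-2(-q)^n$ in Case~(iii).

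\textbf{Main obstacle.} The principal difficulty is the explicit evaluation of the twisted character sum in Cases~(ii) and (iii), in particular verifying that its value depends only on the combinatorial pattern of cosets of $a,b,c$ and not on the specific non-trivial representatives chosen. The collapse relies on orthogonality relations in the dual of $G$ combined with the uniform Davenport--Hasse evaluation $G(\chi')=-q$; bookkeeping across triples with $\chi_1\chi_2\chi_3=1$ is the delicate core of the argument. Once this is settled, the three values obtained for $|\Gamma|$ are manifestly distinct for $n>0$, so $|\Gamma|$ determines the projective equivalence class of $\Gamma$.
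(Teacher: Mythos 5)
The paper itself contains no proof of this theorem: it is quoted from Kestenband's work (in particular \cite{Kes07}), whose arguments are elementary and matrix-theoretic, so your character-sum route is in principle a legitimate alternative. Steps 1--2 are essentially sound: $\gcd(q^m+1,q^{2n}-1)=q+1$ under $(m,2n)=1$, the reduction of the coefficient triple to the three coset patterns, the count-preserving replacement of the exponent $q^m+1$ by $q+1$, and the zeta-function computation of the first cardinality via maximality of the Hermitian curve over $\F_{q^2}$ all check out.

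The genuine gap is in Step 3, exactly at the point you yourself flag as the ``delicate core''. The asserted uniform evaluation $G(\chi_i')=-q$ for every nontrivial character of $\F_{q^2}^*$ trivial on $\F_q^*$ is false: for $q=3$ the quadratic character of $\F_9$ has Gauss sum $+3$ while the quartic characters have Gauss sum $-3$, and for $q=2$ the cubic characters of $\F_4$ have Gauss sum $+2$; by the semiprimitive (Stickelberger/Baumert--McEliece) evaluations the sign depends on the order of the character, and for even-order characters it even depends on the chosen additive character, so no per-character evaluation of this kind can be correct. Moreover, feeding $-q$ into your own framework gives, already for $n=1$, $|\Gamma|=q^2+1-q^2(q-1)$ in case (i), contradicting your Step 2. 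What is true, and what your argument actually needs, is the triple-product statement: $G(\chi_1')G(\chi_2')G(\chi_3')=q^3$ for every admissible triple (nontrivial, orders dividing $q+1$, product trivial). This follows from the maximality you established in Step 2, since the $q(q-1)$ triple products, each of modulus $q^3$, sum to $q^4(q-1)$, so the equality case of the triangle inequality forces each to equal $q^3$; Davenport--Hasse then makes the product over $\F_{q^{2n}}$ the constant $(-1)^{n-1}q^{3n}$ for all triples, and the remaining sum $\sum\bar\chi_1(a)\bar\chi_2(b)\bar\chi_3(c)$ is a short orthogonality/inclusion--exclusion computation whose values in the three coset patterns are $q(q-1)$, $-(q-1)$ and $2$, yielding precisely the three stated cardinalities. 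As written, however, the one concrete evaluation you assert is wrong, and the collapse of the twisted sum --- the actual content of cases (ii) and (iii) --- is left undone. A further small point: the three values are not ``manifestly distinct'' when $q=2$ (the first and third coincide), so the closing implication that $|\Gamma|$ determines the canonical form needs $q>2$ or a separate argument in that case.
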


\begin{theorem}
	Let $\Gamma:X_t A X^\sigma=0$, with $A$ a  non-diagonal matrix,  be the set of absolute points of a correlation of $\PG(2,q^{2n})$, $n>0, q$ even.  Then $|\Gamma| \in \{ q^{2n}-(-q)^{n+1}+1, q^{2n}-(-q)^n+1,   q^{2n}+1. \}$
\end{theorem}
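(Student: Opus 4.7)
The plan is to classify non-degenerate, non-diagonal $\sigma$-sesquilinear forms on $\fqn^{3}$ (with $\fqn = \mathbb{F}_{q^{2n}}$, $q$ even) up to projective equivalence, then to count absolute points in each class. Two matrices $A, A'$ define projectively equivalent forms iff $A' = \lambda \, P_t A P^\sigma$ for some $\lambda \in \fqn^*$ and $P \in \text{GL}(3,\fqn)$, corresponding to a coordinate change $X = PY$; the induced collineation $\varphi = f_{A_t^{-1}A^\sigma,\sigma^{2}}$, whose companion automorphism $\sigma^{2}$ has fixed field $\mathbb{F}_{q^{2}}$ (since $\gcd(m,2n)=1$ forces $\gcd(2m,2n)=2$), is the main invariant of this class and will be used to separate cases.

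First I would isolate canonical forms for non-diagonal $A$. Starting from any invertible, non-diagonal $A$, I would use elementary row-column operations of the twisted type $A\mapsto P_t A P^\sigma$ to eliminate as many entries as possible. In the diagonal case treated in the preceding theorem one reduces to $\diag(a,b,c)$; non-diagonality is an obstruction to this reduction, and a careful case split according to the structure of the $\sigma^{2}$-characteristic polynomial of $A_t^{-1}A^\sigma$ should yield exactly three canonical shapes, call them $A_{1}, A_{2}, A_{3}$, up to automorphisms of $\fqn$.

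Second, for each $A_{i}$ I would count $|\Gamma_{i}|$ by dehomogenising (say setting $x_{3}=1$) and reducing the equation $X_{t} A_{i} X^\sigma=0$ to a scalar equation of shape $L_{i}(y)=h_{i}(x)$, where $L_{i}\in\fqn[y]$ is a $\sigma$-linearised polynomial and $h_{i}$ a $\sigma$-sesquilinear polynomial in $x$. Standard linearised-polynomial techniques (using that $q$ is even, so squaring is $\mathbb{F}_{q}$-linear and $\sigma$ commutes with it) give $|\ker L_{i}|=q^{2k_{i}}$ for some $k_{i}\in\{0,1,n\}$; combining $|\ker L_{i}|$ with the number of preimages of $h_{i}$ that fall into the image of $L_{i}$, and adding the points at infinity, produces the three totals $q^{2n}+1$, $q^{2n}-(-q)^{n}+1$ and $q^{2n}-(-q)^{n+1}+1$. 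Proposition \ref{pr_line} guarantees that $\Gamma$ contains no lines, so each parametrisation is generically finite and the counts are exact.

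The main obstacle I anticipate is the enumeration of the three canonical shapes. The twisted-conjugacy orbits of $3\times 3$ invertible matrices under $M\mapsto P_t M P^\sigma$ are more delicate than ordinary $\text{GL}_{3}$-conjugacy classes, and in characteristic $2$ the usual symmetric/alternating dichotomy collapses, so additional care is needed to handle off-diagonal entries that mix non-trivially with the $\sigma$-action. Once the three canonical forms are in hand and the linearised equation $L_{i}(y)=h_{i}(x)$ is set up for each, the point-counting reduces to computing $\mathbb{F}_{q^{2}}$-ranks of explicit linearised polynomials over $\fqn$, which is mechanical and yields the three sizes in the statement.
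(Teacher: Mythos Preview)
The paper does not actually prove this theorem: Section~5 is explicitly a survey of Kestenband's results, and the closing sentence ``For the results and the examples of this section we refer to the following papers by B.C.\ Kestenband \ldots'' is the only justification offered. So there is no in-paper proof to compare your proposal against; whatever argument exists is spread across the Kestenband references \cite{Kes05_1,Kes14} and is not reproduced here.

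That said, your proposal as written is an outline, not a proof, and it has genuine gaps. The central claim---that twisted conjugacy $A\mapsto P_tAP^\sigma$ produces ``exactly three canonical shapes $A_1,A_2,A_3$''---is asserted without argument, and it is not clear it is even true in that form: in the $q$ odd companion theorem the paper lists \emph{two} structurally different canonical families (a four-entry upper-triangular one and a five-entry one), each with internal parameters, and the three cardinalities arise from subcases within those families rather than from three distinct shapes. Your reduction scheme ``dehomogenise, write $L_i(y)=h_i(x)$ with $L_i$ linearised, read off $|\ker L_i|=q^{2k_i}$ with $k_i\in\{0,1,n\}$'' is also unsupported: nothing you have said forces the $y$-part to be $\mathbb{F}_{q^2}$-linearised rather than merely $\sigma$-polynomial, nor explains why the kernel exponents should be exactly $0,1,n$, nor how the image-counting for $h_i$ interacts with $\ker L_i$ to produce the specific quantities $q^{2n}-(-q)^{n+1}+1$ and $q^{2n}-(-q)^{n}+1$. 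The sign pattern $(-q)^n$ strongly suggests that character-sum or Gauss-sum arguments over $\mathbb{F}_{q^2}$ (or, equivalently, intersection numbers with Hermitian-type varieties) are doing the real work in Kestenband's papers, not a bare kernel computation. Until you actually carry out the twisted-conjugacy classification and exhibit the counts, what you have is a plausible strategy rather than a proof.
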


{\begin{theorem}
		Let $\Gamma:X_t A X^\sigma=0$, with $A$ a non-diagonal matrix, be the set of absolute points of a correlation of $\PG(2,q^{2n})$, $n>0$, $q$ odd.  Then we can distinguish two cases. 
		\begin{enumerate}
			\item $\Gamma$ is projectively equivalent to
			\[r x_1^{\sigma+1}+\rho x_1 x_2^\sigma+sx_2^{\sigma+1}+x_3^{\sigma+1}=0\]
			for some $r, \rho, s$. In this case, the trinomial
			\begin{equation}\label{trii} 
			r x^{q^m+1}+\rho x +s
			\end{equation}
			can have $0,1,2$ or $q+1$ roots. Moreover, the following holds
			$$|\Gamma| \in \{ q^{2n}-(-q)^{n+1}+1, q^{2n}-(-q)^n+1,   q^{2n}+1\}.$$
			\item $\Gamma$ is projectively equivalent to
			\[x_1^{\sigma+1}+ x_1 x_2^\sigma+r x_2^{\sigma+1}+\tau x_2 x_3^\sigma+sx_3^{\sigma+1}=0\]
			for some $r, \tau, s$.
			Moreover  $|\Gamma| \in \{q^{2n}\pm q^n+1, q^{2n}+1\}$.
		\end{enumerate}
		In the first case,
		\begin{itemize}
			\item if Equation $(\ref{trii})$ has either $q+1$  zeros or no zeros, then $\Gamma$ is equivalent to a set, which is already studied in the diagonal case (Cor. 7, Prop. 29, 30 in [4]),
			\item if Equation $(\ref{trii})$ has one zero, then 
			\[ |\Gamma| \in \{q^{2n}-q^{n+1}+1, q^{2n}+q^n+1\} \mbox{ if $n$ is odd}\] 
			\[ |\Gamma| \in \{q^{2n}+q^{n+1}+1, q^{2n}-q^n+1\} \mbox{ if $n$ is even,}\] 
			\item if Equation $(\ref{trii})$ has two zeros, then $|\Gamma|=q^{2n}+1$.
		\end{itemize}
	\end{theorem}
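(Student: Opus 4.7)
My plan is to first reduce the matrix $A$ of a non-degenerate, non-diagonal $\sigma$-sesquilinear form to one of the two canonical shapes stated, and then compute $|\Gamma|$ separately in each case. Since the matrices $A$ and $P_t A P^\sigma$ with $P \in GL_3(\mathbb{F}_{q^{2n}})$ induce projectively equivalent sets of absolute points, the entire classification proceeds modulo this semilinear action.

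For the reduction, starting from an arbitrary non-diagonal $A$, I would look for a vector $v$ with $\langle v, v\rangle \ne 0$ such that the hyperplanes $v^\perp$ and $v^{\tang}$ coincide. If such a $v$ exists, I take $e_3=v$ and complete to a basis in which the form block-decomposes as a $2\times 2$ block on $\langle e_1,e_2\rangle$ plus a $1\times 1$ part on $\langle e_3\rangle$. Because $A$ is assumed non-diagonal, a residual cross-term $\rho x_1 x_2^\sigma$ survives the further triangularization of the $2\times 2$ block; after scaling we obtain case~1. If no such $v$ exists, the two polarity structures are genuinely intertwined, and an analogous but chained normalization produces case~2, with the parameter $\tau$ encoding the precise obstruction to decoupling $e_3$ from $e_2$.

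For case~1, I would split $\Gamma$ according to the line $\ell:x_3=0$. On $\ell$ the equation restricts to the $\sigma$-quadric $rx_1^{\sigma+1}+\rho x_1 x_2^\sigma + s x_2^{\sigma+1}=0$, whose projective points correspond bijectively to the roots in $\mathbb{F}_{q^{2n}}$ of $rY^{q^m+1}+\rho Y+s$; classical results on trinomials give the stratification into $0, 1, 2$ or $q+1$ roots. For the affine part $x_3\ne 0$, setting $x_3=1$ reduces the count to the number of pairs $(x_1,x_2)$ with $rx_1^{\sigma+1}+\rho x_1x_2^\sigma + sx_2^{\sigma+1}=-1$. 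The key observation is that $\gcd(m,2n)=1$ with $q$ odd forces $\gcd(q^m+1,q^{2n}-1)=q+1$, so $z\mapsto z^{q^m+1}$ is $(q+1)$-to-$1$ onto the subgroup of index $q+1$ in $\mathbb{F}_{q^{2n}}^{*}$; a fibre-by-fibre analysis along the parametrization $x_2 \mapsto$ (trinomial in $x_1$) then expresses the affine count as a weighted sum controlled by the trinomial's value distribution. Combining the two counts gives the claimed $|\Gamma|$ in each root sub-case, and the dichotomy $n$ odd versus $n$ even in the one-root case comes from a sign arising in the norm equation $N_{\mathbb{F}_{q^{2n}}/\mathbb{F}_{q^n}}$.

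Case~2 I would treat by the same strategy, using the ``$x_1$-quadratic'' structure of the form to eliminate $x_1$ in terms of $(x_2,x_3)$; the resulting auxiliary equation has an invariant built from $r,\tau,s$ whose status (zero, $(q^m+1)$-power, or non-power) matches the three possibilities $q^{2n}\pm q^n+1$ and $q^{2n}+1$. The main obstacle I anticipate is the rigorous step of showing there are \emph{exactly} these two canonical shapes: one must argue that the obstruction to block-decoupling $e_3$ is captured by a single scalar $\tau$ after rescaling, and that no further off-diagonal entries can be eliminated. A secondary difficulty is the fine affine count in the one-root sub-case of case~1, where distinguishing between $q^{2n}\pm q^{n+1}+1$ and $q^{2n}\mp q^n+1$ according to parity of $n$ requires tracking the action of $\sigma^2$ on the subfield fixed by $\sigma^{2n}$ rather than a purely formal residue calculation.
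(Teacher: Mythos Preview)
The paper does not actually prove this theorem. The entire final section is a survey: after stating this result (and the companion theorems for the other parity/diagonal cases) the authors write only ``For the results and the examples of this section we refer to the following papers by B.C.\ Kestenband \ldots'' and list eleven references. There is no argument in the paper to compare your proposal against.

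As for the proposal itself, it is a plausible outline but not a proof. Two points deserve flagging. First, your reduction step hinges on the dichotomy ``either some nonisotropic $v$ has $v^\perp=v^{\tang}$, or none does,'' and you assert that the second alternative yields exactly the five-term shape of case~2; but you give no argument that the obstruction is captured by a single off-diagonal scalar $\tau$, and you yourself identify this as ``the main obstacle.'' Kestenband's classification runs to several long papers precisely because this normal-form reduction is delicate and case-ridden; a one-paragraph sketch does not discharge it. Second, in the case-1 count you correctly note $\gcd(q^m+1,q^{2n}-1)=q+1$, but the passage from ``fibre-by-fibre analysis'' to the exact values $q^{2n}\pm q^{n+1}+1$ versus $q^{2n}\mp q^n+1$ is left entirely to the reader; the parity-of-$n$ split you mention is real, but tracking it requires an explicit character-sum or norm-equation computation, not the heuristic you offer. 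In short, your strategy is broadly the right shape, but the two places you label as anticipated difficulties are exactly where the substance of the proof lives, and neither is addressed.
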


	\noindent For the results and the examples of this section we refer to the following papers by B.C. Kestenband
	\cite{Kes90,Kes00,Kes03,Kes05_1,Kes05_2,Kes05_3,Kes06,Kes07,Kes09,Kes10,Kes14}.



\begin{thebibliography}{99}

\bibitem{Bal} S. Ball, Finite Geometry and Combinatorial Applications. London Mathematical Society Student Texts, Cambridge University Press, (2015).

\bibitem{Cam} P.J.  Cameron, Projective and polar spaces. University of London. {\em Queen Mary and Westfield College}, (1992).


\bibitem{CosMarPav}  A. Cossidente, G. Marino, F. Pavese, Non-linear maximum rank distance codes. {\em Des. Codes Cryptogr.}, 79 (3), (2016), 597--609.


\bibitem{Del} Ph. Delsarte, Bilinear forms over a finite field, with applications to coding theory.
{\em J. Combin. Theory Ser. A} 25, (1978), 226--241.


\bibitem{Dem} P. Dembowski, Finite geometries. Berlin, Heidelberg, New York: Springer-Verlag, (1997), (reprint of the 1968 edition).


\bibitem{DonDur03} G. Donati, N. Durante,
Some subsets of the Hermitian curve. {\em European J. of Combinatorics} 24 (2), (2003), 211--218.


\bibitem{DonDur05} G. Donati, N. Durante,
Baer subplanes generated by collineations between pencils of lines. {\em Rendiconti del Circolo Matematico di Palermo}. Serie II. Tomo LIV, (2005), 93--100.


\bibitem{DonDur14} G. Donati, N. Durante, Scattered linear sets generated by collineations between pencils of lines. 
{\em J. Algebraic Combin.} 40 (4), (2014), 1121--1134.


\bibitem{DonDur18}  G. Donati, N. Durante, 
A generalization of the normal rational curve in $\PG(d,q^n)$ and its associated non-linear MRD codes. {\em Des. Codes Cryptogr.} 86 (6), (2018), 1175--1184. 

\bibitem{Dur} N. Durante, Geometry of sesquilinear forms. Notes of a course given at the Summer School
FINITE GEOMETRY \& FRIENDS, Brussels,  (2019).

\bibitem{DurSic} N. Durante, A. Siciliano, Non-linear maximum rank distance codes in the cyclic model for the field reduction of finite geometries. {\em Electron. J. Combin.} 24 (2), (2017), Paper 2.33.


\bibitem{Gru-Wei} K.W. Gruenberg,  A.J. Weir, Linear Geometry. Graduate Texts in Mathematics, 49 (1st ed.), Springer-Verlag New York, (1977).


\bibitem{HelKho} T. Helleseth, A. Kholosha, $x^{2^l+1}+x+a$ and related affine polynomials over $\GF(2^k)$.
{\em Cryptogr. Commun.} 2, (2010), 85--109.

\bibitem{HugPip} D.R. Hughes, F.C. Piper, Projective planes. Graduate Texts in Mathematics, Vol. 6. Springer-Verlag, New York-Berlin, (1973).


\bibitem{Kes90} B.C. Kestenband,  Correlations whose squares are perspectivities. {\em Geom. Dedicata} 33 (3), (1990), 289--315. 

\bibitem{Kes00} B.C. Kestenband,  The correlations with identity companion automorphism, of finite Desarguesian planes. {\em Linear Algebra Appl}. 304 (1-3), (2000),  1--31.

\bibitem{Kes03} B.C.   Kestenband, The correlations of finite Desarguesian planes. I. Generalities. {\em J. Geom}. 77 (1-2), (2003),  61--101. 

\bibitem{Kes05_1} B.C.  Kestenband, The correlations of finite Desarguesian planes of even nonsquare order. {\em JP J. Geom. Topol}. 5 (1), (2005),  1--62.

\bibitem{Kes05_2} B.C. Kestenband,  The correlations of finite Desarguesian planes. II. The classification (I). {\em J. Geom}. 82 (1-2), (2005), 91--134. 

\bibitem{Kes05_3} B.C.  Kestenband, The correlations of finite Desarguesian planes. III. The classification (II). {\em J. Geom}. 83 (1-2), (2005), 88--120.

\bibitem{Kes06} B.C. Kestenband,  The correlations of finite Desarguesian planes. IV. The classification (III). {\em J. Geom}. 86 (1-2), (2006),  98--139.

\bibitem{Kes07} B.C.  Kestenband,  The correlations of finite Desarguesian planes of square order defined by diagonal matrices. {\em Linear Algebra Appl.} 423 (2-3), (2007), 366--385. 

\bibitem{Kes09} B.C. Kestenband,  Embedding finite projective geometries into finite projective planes. {\em Int. Electron. J. Geom}. 2 (2), (2009),  27--33. 

\bibitem{Kes10} B.C. Kestenband,  The correlations of finite Desarguesian planes of odd square order defined by upper triangular matrices with four nonzero entries. {\em JP J. Geom. Topol}. 10 (2), (2010),  113--170. 

\bibitem{Kes14} B.C.  Kestenband, The correlations of finite Desarguesian planes of even square order defined by upper triangular matrices with four nonzero entries. {\em JP J. Geom. Topol}. 16 (2), (2014),  127--152. 


\bibitem{LawVan} {\rm M. Lavrauw, G. Van de Voorde}, {\rm On linear sets on a projective line.} {\it Des. Codes Cryptogr.} {56}, (2010), 89--104.


\bibitem{LidNie}
R. Lidl, H. Niederreiter, Finite Fields. (2nd ed.) Cambridge University Press, (1997).


\bibitem{Lun-Mar-Pol-Tro} {\rm G. Lunardon, G. Marino, O. Polverino, R. Trombetti}, {\rm Maximum scattered linear sets of pseudoregulus type and Segre Variety ${\cal S}_{n,n}$}. {\it J. Algebr. Comb.} {39} (2014), 807--831.

\bibitem{Ste} {\rm J. Steiner}, {\rm Systematische Entwichlung der Abh$\ddot{a}$ngigkeit Geometrische Gestalten von
	einander.}  { Reimer, Berlin (1832)}.

\end{thebibliography}
\end{document}